\definecolor{hanblue}{rgb}{0.27, 0.42, 0.81}
\definecolor{red}{rgb}{1.0, 0.0, 0.0}
\newtheorem{thm}{Theorem}
\newtheorem*{thmnonumber}{Theorem}
\theoremstyle{plain}
\theoremstyle{remark}
\newtheorem{rem}[thm]{Remark}
\theoremstyle{plain}
\newtheorem{lem}[thm]{Lemma}
\theoremstyle{plain}
\theoremstyle{plain}
\newtheorem{cor}[thm]{Corollary}
\theoremstyle{definition}
\newtheorem{definition}[thm]{Definition}
\theoremstyle{plain}
\theoremstyle{plain}
	\newcommand{\bes}{\begin{equation*}}
	\newcommand{\ees}{\end{equation*}}
	\newcommand{\be}{\begin{equation}}
	\newcommand{\ee}{\end{equation}}
	\newcommand{\bi}{\begin{itemize}}
	\newcommand{\ei}{\end{itemize}}
		\def\bas#1\eas{\begin{align*}#1\end{align*}}
		\def\ba#1\ea{\begin{align}#1\end{align}}
	\newcommand{\baed}{\begin{aligned}}
	\newcommand{\eaed}{\end{aligned}}
	\newcommand{\R}{\mathbb{R}}
	\newcommand{\N}{\mathbb{N}}
		\newcommand{\Om}{\Omega}
	\newcommand{\M}{\mathcal{M}}
	\newcommand{\calM}{\mathcal{M}}
	\newcommand{\calD}{\mathcal{D}}
  \let\div\relax
  \DeclareMathOperator*{\div}{div}
	\newcommand{\norm}[1]{\left\lVert#1\right\rVert}
	\newcommand{\p}{\partial}
	\newcommand{\de}{\partial}
	\newcommand{\olom}{\overline{\Om}}
        \newcommand{\WT}{X}
\newcommand{\f}{\varphi}
\newcommand{\zak}{%
  \mathbin{\vrule height 1.6ex depth 0pt width
0.13ex\vrule height 0.13ex depth 0pt width 1.3ex}
}    %
\DeclareMathOperator{\supp}{supp} %
\newcommand{\curves}{C_{\rm w} ([0,1];\M(\overline{\Om}))}
\newcommand{\pcurves}{C_{\rm w} ([0,1];\M^+(\overline{\Om}))}
\newcommand{\weakstar}{\stackrel{*}{\rightharpoonup}}  %
\DeclareMathOperator{\ext}{Ext} %
\DeclareMathOperator{\gr}{graph}
\newcommand{\AC}{{\rm AC}}
 \newcommand{\points}{\mathcal{C}_{\alpha,\beta}}
 \newcommand{\prob}{\mathcal{P}}
\author{Kristian Bredies}
\address[Kristian Bredies]{Institute of Mathematics and Scientific Computing, University of Graz, Heinrichstra\ss{}e 36, 8010 Graz, Austria.} 
\email{kristian.bredies@uni-graz.at}
\author{Marcello Carioni}
 \address[Marcello Carioni]{University of Cambridge, Department of Applied Mathematics and Theoretical Physics, Wilberforce Road, Cambridge
CB3 0WA, UK}
\email{mc2250@maths.cam.ac.uk}
\author{Silvio Fanzon}
\address[Silvio Fanzon]{Institute of Mathematics and Scientific Computing, University of Graz, Heinrichstra\ss{}e 36, 8010 Graz, Austria.}
\email{silvio.fanzon@uni-graz.at}
\author{Francisco Romero}
\address[Francisco Romero]{Institute of Mathematics and Scientific Computing, University of Graz, Heinrichstra\ss{}e 36, 8010 Graz, Austria.} 
\email{francisco.romero-hinrichsen@uni-graz.at}
\title{On the extremal points of the ball of the Benamou--Brenier energy}
\begin{document}
\maketitle

\begin{abstract}
\sloppy
In this paper we characterize the extremal points of the unit ball of the Benamou--Brenier energy and of a coercive generalization of it, both subjected to the homogeneous continuity equation constraint. We prove that extremal points consist of pairs of measures concentrated on absolutely continuous curves which are characteristics of the continuity equation. Then, we apply this result to provide a representation formula for sparse solutions of dynamic inverse problems with finite dimensional data and optimal-transport based regularization.
\end{abstract}

\medskip

\begin{center}
{\small \textbf{Keywords}:  Benamou--Brenier energy, extremal points, continuity equation, superposition principle, dynamic inverse problems, sparsity}

\vspace*{1mm}

{\small\textbf{Mathematics Subject Classification (2010):}
 		52A05, 49N45,  49J45, 35F05}
\end{center}

\section{Introduction}

The classical theory of Optimal Transport deals with the problem of efficiently transporting mass from a probability distribution into a target one. In the last thirty years, great advances in the understanding of the underlying theory have been achieved \cite{ags, cuturi,sant}. However, only recently these techniques are starting to be applied in order to solve computational problems in a great variety of fields, with logistic problems \cite{bernot, mass, cities, equilibrium}, crowd dynamics \cite{macroscopic, handlingcongestion}, image processing \cite{hmp,mrss,mbmoj,histogram,papadakis, activecontours,colourmapping}, inverse problems \cite{bf,kr} and machine learning \cite{wassersteingan, wassersteinloss, learninggenerative, wassersteintraining, sae, wae} being a few examples.

In this paper we focus on the so-called Benamou--Brenier formula, 
which provides an equivalent dynamic formulation of the classical Monge--Kantorovich transport problem \cite{kantorovich}. 
Introduced by Benamou and Brenier in \cite{bb}, such formula allows to compute an optimal transport between two probability measures $\rho_0$ and $\rho_1$ on a closed bounded domain $\olom \subset \R^d$ through the minimization of the kinetic energy 
\begin{equation} \label{intro:bb}
\frac{1}{2}\int_0^1\int_{\olom} |v_t(x)|^2 d\rho_t(x)\,,
\end{equation} 
among all the pairs $(\rho_t,v_t)$, where $t \mapsto \rho_t$ is a curve of probability measures on $\olom$, $v_t \colon \olom \to \R^d$ is a time-dependent vector field and the pair $(\rho_t,v_t)$ satisfies distributionally the continuity equation
\begin{equation} \label{intro:cont}
\partial_t \rho_t + \div(\rho_t v_t) = 0 \quad \text{ subjected to } \quad \rho_{t=0}=\rho_0 , \,\, \rho_{t=1}=\rho_1 \,.
\end{equation}
The interest around the Benamou--Brenier formulation is motivated by its remarkable properties.  First, it allows to compute an optimal transport in an efficient way \cite{bb} by means of a convex reformulation of \eqref{intro:bb}, by introducing the momentum $m_t=\rho_tv_t$. More precisely, denoting by  $X:=(0,1) \times \olom$ the time-space cylinder, the Benamou--Brenier energy \eqref{intro:bb} can be equivalently defined as a convex functional on the space of bounded Borel measures $\M:=\M(X) \times \M(X;\R^d)$ by setting
\begin{equation} \label{intro:BB_convex}
B (\rho,m):= \frac{1}{2}\int_{X} \left| \frac{dm}{d\rho}(t,x)\right|^2 \, d \rho(t,x) \,,
\end{equation}
whenever $(\rho,m)\in \M$ are such that $\rho \geq 0$, $m \ll \rho$, and $B:=+\infty$ otherwise. 
With this change of variables, the continuity equation at \eqref{intro:cont} assumes the form
\begin{equation}\label{eq:contmeasures}
\partial_t \rho + \div m = 0 \,.
\end{equation}
In addition, the  dynamic nature of the Benamou--Brenier reformulation of optimal transport is at the core of many recent developments in the fields of PDEs, optimal transport and inverse problems. Indeed, the dynamic formulation allows to endow the space of probability measures with a differentiable structure \cite{ags,sant}, making possible the characterization of differential equations as gradient flows in spaces of measures \cite{agsinventiones,ottokinderleher2,otto1} or the derivation of sharp inequalities \cite{cordero,maggi,otto_villani}.  Moreover, it motivated recent developments in unbalanced optimal transport theory \cite{chizat2,chizat,kmv,liero}, i.e., when the marginals are arbitrary positive measures. Finally, as the Benamou--Brenier energy provides a description of the optimal flow of the transported mass at each time $t$, which is a valuable information in applications, it was recently employed as a regularizer for variational inverse problems \cite{inprep,inprep2,bf, hmp,mrss,schmitzerwirth2020}.

\smallskip

The goal of this paper is to characterize the extremal points of the unit ball of the Benamou--Brenier energy $B$ at \eqref{intro:BB_convex}, and of a coercive version of it, which is obtained by adding the total variation of $\rho$ to $B$. Both functionals are constrained via the continuity equation \eqref{intro:cont}.  Precisely, we introduce the functional 
\begin{equation}\label{intro:J}
J_{\alpha,\beta}(\rho,m):=   \beta B(\rho,m) + \alpha \|\rho\|_{\mathcal{M}(X)}  \quad \text{ subjected to } \quad \de_t \rho+ \div m=0 \,,
\end{equation}
defined for all $(\rho,m) \in \M$ and $\alpha \geq 0$, $\beta>0$. We then characterize the extremal points of the subset of $\M$ defined by
\[
C_{\alpha,\beta}:=\{(\rho,m) \in \M \, \colon \, J_{\alpha,\beta} (\rho,m)  \leq 1\}\,.
\]
We emphasize that we do not enforce initial conditions to the continuity equation in \eqref{intro:J}. To be more specific, we prove the following result (see Theorem \ref{thm:extremal}).
\begin{thmnonumber} 
Let $\alpha \geq 0$, $\beta>0$. The extremal points of the set $C_{\alpha,\beta}$  
  are exactly given by the zero measure $(0,0)$ and the pairs of measures $(\rho,m)$ such that 
  \[
  \rho=a_{\gamma} \, dt \otimes  \delta_{\gamma(t)},\, \,\,\, \, m=\dot{\gamma}(t) a_\gamma\, dt \otimes  \delta_{\gamma(t)} , \,\,\,\,\, a_\gamma = \left(\frac\beta2 \int_0^1 |\dot\gamma(t)|^2 \, dt + \alpha\right)^{-1}\,,
  \]
  where $\gamma: [0,1] \to \olom$ is an absolutely continuous curve with weak derivative $\dot\gamma \in L^2$,
  and such that $a_{\gamma}<+\infty$. If $\alpha = 0$ the condition $a_{\gamma}<+\infty$ is satisfied if and only if $\gamma$ is not constant. 
\end{thmnonumber}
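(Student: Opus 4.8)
The plan is to characterize the extremal points by combining a decomposition/superposition argument with a homogeneity reduction. First I would observe that the constraint $J_{\alpha,\beta}(\rho,m)\le 1$ together with the continuity equation $\partial_t\rho+\div m=0$ forces $\rho\ge 0$ (otherwise $B=+\infty$) and $m\ll\rho$ with $m=v\rho$ for some $v\in L^2(\rho;\R^d)$; moreover $\rho$ has no atoms in time in the sense that it disintegrates as $\rho=\int_0^1\rho_t\,dt$ with $t\mapsto\rho_t$ a weakly continuous curve of measures (after normalizing the total time-mass), because $J_{\alpha,\beta}<\infty$ bounds $\rho$ in $\M(X)$ and the continuity equation gives the time-regularity. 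The key tool is the superposition principle (Ambrosio--Gigli--Savar\'e / Lisini): any such $(\rho,m)=(\rho,v\rho)$ with $\int_0^1\int_X|v_t|^2\,d\rho_t\,dt<\infty$ can be represented as a superposition of absolutely continuous curves, i.e.\ there is a probability measure $\eta$ on $C([0,1];\olom)$ concentrated on $\AC^2$-curves such that $\rho_t=(e_t)_\#\eta$ (up to the normalization constant) and $m_t=(e_t)_\#(\dot\gamma(t)\eta)$, with $\int\!\!\int_0^1|\dot\gamma(t)|^2\,dt\,d\eta(\gamma)$ controlling $B$. This decomposes $(\rho,m)$ as an integral average over the ``elementary'' pairs $(\rho_\gamma,m_\gamma):=(dt\otimes\delta_{\gamma(t)},\dot\gamma(t)\,dt\otimes\delta_{\gamma(t)})$.

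Next I would pass from this average to an average of \emph{points of $C_{\alpha,\beta}$}. Each elementary pair, when rescaled by $a_\gamma=\bigl(\tfrac\beta2\int_0^1|\dot\gamma|^2\,dt+\alpha\bigr)^{-1}$, has $J_{\alpha,\beta}$ exactly equal to $1$ (direct computation: $B(\rho_\gamma,m_\gamma)=\tfrac12\int_0^1|\dot\gamma(t)|^2\,dt$ and $\|\rho_\gamma\|_{\M(X)}=1$, both scaling linearly, so $J_{\alpha,\beta}(a_\gamma\rho_\gamma,a_\gamma m_\gamma)=a_\gamma(\tfrac\beta2\int|\dot\gamma|^2+\alpha)=1$); hence the rescaled elementary pairs lie on the boundary of $C_{\alpha,\beta}$. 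A careful bookkeeping of the constants (using convexity/$1$-homogeneity of $B$ in the sense that $B$ is $2$-homogeneous but $J_{\alpha,\beta}$ is subadditive along the decomposition, plus Jensen) shows that $(\rho,m)$ is a convex combination (an integral average with respect to a probability measure on the rescaled elementary pairs) of points of $C_{\alpha,\beta}$, with the weights absorbing $1/a_\gamma$. Consequently, if $(\rho,m)$ is extremal it cannot be a nontrivial such average, which forces $\eta$ to be a Dirac mass $\delta_{\ogamma}$ for a single curve $\ogamma$, i.e.\ $(\rho,m)=(a_{\ogamma}\rho_{\ogamma},a_{\ogamma}m_{\ogamma})$ (or the trivial pair $(0,0)$, which is clearly extremal). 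The degenerate case $\alpha=0$: then $a_\gamma<\infty$ exactly when $\int_0^1|\dot\gamma|^2\,dt>0$, i.e.\ $\gamma$ non-constant; a constant curve gives $m_\gamma=0$ and $\rho_\gamma$ of infinite admissible scaling, so it does not produce a point of $C_{0,\beta}$ other than $(0,0)$.

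For the converse — that each such $(a_\gamma\rho_\gamma,a_\gamma m_\gamma)$ \emph{is} extremal — I would argue by contradiction: suppose $a_\gamma(\rho_\gamma,m_\gamma)=\tfrac12(\rho^1,m^1)+\tfrac12(\rho^2,m^2)$ with $(\rho^i,m^i)\in C_{\alpha,\beta}$ distinct. Since $\rho_\gamma=dt\otimes\delta_{\gamma(t)}$ is concentrated on the graph of $\gamma$, both $\rho^i$ are too, so $\rho^i=c^i_t\,dt\otimes\delta_{\gamma(t)}$ for some nonnegative densities, and then $m^i=\dot\gamma(t)\,c^i_t\,dt\otimes\delta_{\gamma(t)}$ by the continuity equation constrained to the graph (the velocity is pinned to $\dot\gamma$). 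Now evaluate $J_{\alpha,\beta}$: strict convexity of $B$ in the density (as a function of $c_t$, the integrand $\tfrac12|\dot\gamma(t)|^2 c_t$ is linear — so I must instead use strict convexity coming from the $m^2/\rho$ structure, or rather use that the constraint $J_{\alpha,\beta}\le1$ is tight and the map $c\mapsto J$ is affine along this line, meaning both endpoints must also be tight and the only freedom is rescaling $c_t$ by a constant, contradicting that the total mass is fixed to $a_\gamma$). The main obstacle I anticipate is precisely this last point: because $B$ restricted to measures on a fixed curve with fixed velocity is \emph{linear} rather than strictly convex in the density, extremality does not follow from strict convexity alone; one must use the normalization $J_{\alpha,\beta}=1$ on $C_{\alpha,\beta}$ together with the fact that any decomposition $\rho_\gamma=\rho^1+\rho^2$ into measures on the same curve with total $J_{\alpha,\beta}$-cost $\le 1$ must split the time-density proportionally, and proportional splits of a single elementary curve are not genuinely distinct points after rescaling. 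Making this rigidity argument watertight — ruling out, e.g., splitting the time interval $[0,1]$ into two pieces and reparametrizing — is the crux, and I would handle it by exploiting that $\rho^i$ must have the same total mass in time (forced by $J_{\alpha,\beta}(\rho^i,m^i)\le1$ and additivity) combined with the sharp form of Jensen's inequality for $\int_0^1|\dot\gamma(t)|^2 c^i_t\,dt$ under the mass constraint.
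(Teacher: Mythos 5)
Your overall strategy coincides with the paper's: a superposition principle over $\AC^2$ characteristics yields $\ext(C_{\alpha,\beta})\subset\{(0,0)\}\cup\points$, and a direct rigidity argument yields the converse inclusion. However, both of your crux steps have genuine gaps. In the direction ``extremal $\Rightarrow$ characteristic'', writing $(\rho,m)$ as an integral average of rescaled elementary pairs does not by itself contradict extremality: to conclude you must exhibit a \emph{binary} decomposition $(\rho,m)=\lambda_1(\rho^1,m^1)+\lambda_2(\rho^2,m^2)$ with $(\rho^1,m^1)\neq(\rho^2,m^2)$, both in $C_{\alpha,\beta}$ (or invoke Bauer's characterization of extremal points via representing measures, which requires compactness and metrizability and fails for $C_{0,\beta}$). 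The paper does this by fixing a time $\hat t$ at which $\supp\rho_{\hat t}$ is not a singleton, a Borel set $E$ with $0<\rho_{\hat t}(E)<1$, splitting the curve measure $\sigma$ into its restrictions to $e_{\hat t}^{-1}(E)$ and the complement, and checking that the two normalized pieces disagree at time $\hat t$; an arbitrary splitting could produce two identical halves after rescaling. Moreover, for $\alpha=0$ your weights $a_\gamma^{-1}$ vanish on constant curves, so the convex-combination bookkeeping breaks down if $\sigma$ charges the set $Z$ of constant curves; the paper needs a separate decomposition argument to first establish $\sigma(Z)=0$, and your remark that constant curves ``do not produce a point of $C_{0,\beta}$ other than $(0,0)$'' does not supply this (indeed $\lambda\, dt\otimes\delta_{x_0}\in C_{0,\beta}$ for every $\lambda>0$).

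In the direction ``characteristic $\Rightarrow$ extremal'' you correctly identify the obstacle — $B$ restricted to measures on a fixed curve with velocity $\dot\gamma$ is \emph{linear} in the time-density $c^i_t$ — but your proposed repair cannot work: the sharp form of Jensen's inequality applied to $\int_0^1|\dot\gamma(t)|^2 c^i_t\,dt$ is vacuous precisely because that functional is linear in $c^i_t$, so it holds with equality for every density and yields no rigidity. The correct tool, which the paper uses, is mass conservation for distributional solutions of $\partial_t\rho+\div m=0$ on $X=(0,1)\times\olom$ with no-flux boundary conditions and no prescribed endpoint data: $t\mapsto\rho^i_t(\olom)$ is constant in time (Lemma \ref{lem:prop cont}). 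This pins $c^i_t\equiv a_i$ for a constant $a_i>0$; combined with the identity $J_{\alpha,\beta}(\rho^i,m^i)=a_i/a_\gamma$ and the fact that convexity and tightness at $(\rho,m)$ force $J_{\alpha,\beta}(\rho^i,m^i)=1$, one gets $a_i=a_\gamma$ and hence $(\rho^i,m^i)=(\rho,m)$. You should also prove, rather than assert, that the velocity of $m^i$ is pinned to $\dot\gamma$ on the graph of $\gamma$; the paper obtains this by testing the continuity equation for $(\rho^i,m^i)$ against $\f(t,x)=x_k\psi(t)$ with $\psi\in C^1_c((0,1))$.
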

We therefore show that the extremal points of the set $C_{\alpha,\beta}$ are pairs of measures $(\rho,m)$, with $\rho$ concentrated on some absolutely continuous curve $\gamma$ in $\olom$, and the density of $m$ with respect to $\rho$ is given by $\dot \gamma$ . Notice that such conditions are equivalent to the existence of a measurable field $v \colon X \to \R^d$ such that 
\begin{equation}\label{ode}
\dot{\gamma}(t)=v(t,\gamma(t)) \quad \mbox{ for a.e.} \quad t \in (0,1)\,,
\end{equation} 
thus showing that $\gamma$ is a characteristic associated to the continuity equation at \eqref{intro:cont} with respect to the field $v$.
We prove the above Theorem in Section \ref{sec:characterizationextremal}, with the aid of a probabilistic version of the superposition principle for positive measure solutions to the continuity equation \eqref{intro:cont} on the domain $(0,1) \times \olom$ (see Theorem \ref{thm:lifting}). We mention that the ideas behind such superposition principle are not new, and they were  originally introduced in \cite{ambrosioinventiones} for positive measures on $(0,1) \times \R^d$ (see also \cite{ags, bernard, trevisan}). The result of Theorem \ref{thm:lifting} allows to decompose any measure solution $(\rho,m)$ of the continuity equation \eqref{eq:contmeasures} with bounded Benamou--Brenier energy, as superposition of measures concentrated on absolutely continuous characteristics of \eqref{eq:contmeasures}, that is, curves solving \eqref{ode} with $v=dm/d\rho$. As a consequence, we show any pair of measures that is not of such a form can be written as a proper convex combination of elements of $C_{\alpha,\beta}$ and thus it is not an extremal point.  The opposite inclusion follows from the convexity of the energy and the properties of the continuity equation.

\smallskip

The interest in characterizing extremal points of the Benamou--Brenier energy is not only theoretical. It has been recently shown in \cite{bc} and \cite{chambolle} that in the context of variational inverse problems with finite-dimensional data, the structure of sparse solutions is linked to the extremal points of the unit ball of the regularizer. 
In the classical theory of variational inverse problems one aims to solve
\begin{equation}\label{eq:inverseproblemintro}
\min_{u\in \mathcal{U}} \ R(u) \quad \text{ subjected to } \quad  Au = y\,,
\end{equation} 
where $\mathcal{U}$ is the target space, $R$ is a convex regularizer, $A$ is a linear observation operator mapping to a finite-dimensional space and $y$ is the observation.
It has been empirically observed that the presence of the regularizer $R$ is promoting the existence of sparse solutions, namely minimizers that can be represented as a finite linear combination of simpler atoms. 
While this effect has been well-understood in the case when $\mathcal{U}$ is finite dimensional, the infinite-dimensional case has been only recently addressed \cite{chambolle, bc, sparsetik, flinthweiss, unser3, unser2, unsersplines}. In particular, in \cite{chambolle,bc}, it has been shown that, under suitable assumptions on $R$ and $A$, there exists a minimizer of \eqref{eq:inverseproblemintro} that can be represented as a finite linear combination of extremal points of the unit ball of $R$; namely the atoms forming a sparse solution are the extremal points of the ball of the regularizer.

In view of the above discussion, in Section \ref{sec:application} we apply our characterization of the extremal points of the energy $J_{\alpha,\beta}$ at \eqref{intro:J} to understand the structure of sparse solutions for inverse problems with such transport energy acting as regularizer. We mention that the analysis is carried out for the case $\alpha > 0$, as the functional $J_{0,\beta}$, corresponding to the rescaled Benamou--Brenier energy, lacks of compactness properties (see Remark \ref{rem:compact}). We verify that the assumptions needed to apply the representation theorems in \cite{bc} and \cite{chambolle} are satisfied by $J_{\alpha,\beta}$, and consequently we deduce the existence of a minimizer that is given by a finite linear combination of measures concentrated on absolutely continuous curves in $\olom$ (see Theorem \ref{thm:sparserepresentation}). As a specific application of Theorem \ref{thm:sparserepresentation} we consider the setting introduced in \cite{bf}, where the regularizer $J_{\alpha,\beta}$ is coupled with a fidelity term that penalizes the distance between the unknown measure $\rho_t$ computed at $t_1,\ldots,t_N \in (0,1)$, and the observation at such times (see Section \ref{subsec:dynamic}). This setting is relevant for applications, such as variational reconstruction in undersampled dynamic MRI. Employing the previous results we are able to prove the existence of a sparse solution represented with a finite linear combination of measures concentrated on absolutely continuous curves in $\olom$ (see Corollary \ref{corollary}).

\smallskip

To conclude, we mention that characterizing the extremal points for a given regularizer has important consequences in devising algorithms able to compute a sparse solution. Notable examples have been proposed for the total variation regularizer in the space of measures \cite{boyd, brediesp} using so-called \emph{generalized conditional gradient methods} (or Frank--Wolfe-type algorithms \cite{frankwolfe}). Inspired by the previous methods, and building on the theoretical results obtained in the present paper, we plan to develop numerical algorithms to compute sparse solutions of dynamic inverse problems with the optimal transport energy $J_{\alpha,\beta}$ as a regularizer \cite{inprep,inprep2}, effectively providing a numerical counterpart to the theoretical framework established in \cite{bf}. 
Finally, we remark that similar results to the ones presented in this paper can be obtained for unbalanced optimal transport energies. This has been recently achieved in \cite{superpositioninhomogeneous}, by introducing a novel superposition principle for measure solutions to the inhomogeneous continuity equation.

\section{Mathematical setting and preliminaries}

In this section we give the basic notions about the continuity equation, the Benamou--Brenier energy, and its coercive version $J_{\alpha,\beta}$  anticipated in the introduction. We refer to \cite{ ags, bb, bf} for a more detailed overview. For measure theoretical notions, we refer to the definitions in \cite{afp}.

Given a metric space $Y$ we will denote by $\M(Y)$ (resp.~$\M(Y ; \R^d)$)  the space of bounded Borel measures (resp.~bounded vector Borel measures) on $Y$. Similarly,  $\M^+(Y)$ and $\mathcal{P}(Y)$ denote the set of bounded positive Borel measures and Borel probability measures on $Y$, respectively.
Let \(\Omega \subset \R^d\) be an open, bounded domain with \(d\in \N, d \geq 1\). 
Set \(\WT := (0,1) \times \overline \Omega\), 
\[
  \M := \M(X) \times \M( X ; \R^d)\, ,
\] 
and
\[
  \calD := \{(\rho,m) \in \calM \ : \ \p_t \rho + \div m = 0 \ \ \text{in} \ \ 
  \WT \}\,,
\]
where the solutions of the continuity equation are intended in a distributional 
sense, that is,
\begin{equation} \label{cont weak}
\int_{X}  \de_t \f \, d\rho + 
\int_{X}  \nabla \f \cdot  dm   = 0   \quad \text{for all} \quad \f \in C^{\infty}_c ( X ) \,.
\end{equation}

We remark that the above weak formulation includes no-flux boundary conditions for the momentum $m$ on $\de \Om$. Also, no initial and final data is prescribed in \eqref{cont weak}. Moreover, by standard approximation arguments, we can consider in \eqref{cont weak} test functions in $C^1_c ( X )$ (see \cite[Remark 8.1.1]{ags}).

We now introduce the Benamou--Brenier energy.
For this purpose, define the convex, lower semicontinuous and one-homogeneous map $\Psi \colon \R \times \R^d \to [0,\infty]$ by setting
\[%\label{intro Psi}
\Psi(t,x):=
\begin{cases}
\frac{|x|^2}{2t} & \text{ if } t>0\,, \\
0    & \text{ if } t=|x|=0 \,, \\
+ \infty& \text{ otherwise}\,. 
\end{cases}
\]
The Benamou--Brenier energy $B: \M \to [0,\infty]$ is defined 
for every pair $(\rho,m) \in \mathcal{M}$ as
\begin{equation} \label{intro convex}
B (\rho,m):= \int_X \Psi \left( \frac{d\rho}{d\lambda}, \frac{dm}{d\lambda}\right) \, d \lambda \,,
\end{equation}
$\lambda \in \M^+(X)$ is such that $\rho,m \ll \lambda$. 
Since $\Psi$ is one-homogeneous, the above representation of $B$ does not depend on $\lambda$. 
For some
fixed $\alpha \geq 0$, $\beta > 0$, we consider the following functional
\begin{equation} \label{prel:reg}
  J_{\alpha, \beta}(\rho,m) := \begin{cases} 
      \beta B(\rho, m) +
      \alpha \norm{\rho}_{\M(X)} & \,\, \text{ if }(\rho,m) \in \calD, \\ 
      + \infty\qquad  & \,\,   \text{ otherwise},
    \end{cases} 
\end{equation}
where $\norm{\cdot}_{\M(X)}$ denotes the total variation norm in  \(\calM(\WT)\).  
\begin{rem} \label{rem:compact}
Note that in the definition of $J_{\alpha,\beta}$ we add the total variation of $\rho$ to the Benamou--Brenier energy. If $\alpha > 0$ this choice enforces the balls of the energy $J_{\alpha, \beta}$ to be compact in the weak* topology of $\M$ (see Lemma \ref{lem:prop J}). As a consequence, the functional $J_{\alpha,\beta}$  is a natural regularizer for dynamic inverse problems when the initial and final data are not prescribed \cite{bf}.
We remark that, although in the case $\alpha = 0$ the unit ball of the energy $J_{0, \beta}$ is not compact, we can still characterize its extremal points. However, in this case, due to the lack of coercivity, $J_{0,\beta}$ has limited use as a regularizer for dynamic inverse problems.
\end{rem}

For a measure $\rho \in \M(X)$, we say that $\rho$ disintegrates with respect to time if there exists a Borel family of measures $\{\rho_t\}_{t \in [0,1]}$ in %
$\M(\olom)$ such that
	\[
	\int_X \f(t,x) \, d\rho(t,x) = \int_0^1 \int_{\olom} \f (t,x) \, d\rho_t(x) \, dt \quad \text{ for all } \quad  \f \in L^1_{\rho}(X) \,,
	\]
We denote such disintegration with the symbol $\rho =dt \otimes  \rho_t$. Further, we say that a curve of measures $t \in [0,1] \mapsto \rho_t \in \M(\olom)$ is narrowly continuous if the map
\[
t \mapsto \int_{\olom} \f(x) \, d\rho_t(x)
\]
is continuous for each fixed $\f \in C(\olom)$. The family of narrowly continuous curves will be denoted by $\curves$. We also introduce $\pcurves$, as the family of narrowly continuous curves with values into the positive measures on $\olom$.

We now recall several results about $B$, $J_{\alpha,\beta}$ and measure solutions of the continuity equation \eqref{cont weak}, which will be useful in the following analysis. For proofs of such results, we refer the interested reader to Propositions 2.2, 2.4, 2.6 and Lemmas 4.5, 4.6 in \cite{bf}.

\begin{lem}[Properties of $B$] \label{lem:prop B}
	The functional $B$ defined in \eqref{intro convex} is convex, positively one-homogen\-eous and sequentially lower semicontinuous with respect to the weak* topology on $\M$. Moreover it satisfies the following properties:
	\begin{enumerate}
	\item[i)] $B(\rho,m) \geq 0$ for all $(\rho,m)\in \M$,
	\item [ii)] if $B(\rho,m)<+\infty$, then $\rho \geq 0$ and $m \ll \rho$, that is, there exists a measurable map $v \colon X \to \R^d$ such that $m=v\rho$,
	\item [iii)] if $\rho \geq 0$ and $m = v \rho$ for some $v \colon X \to \R^d$ measurable, then
\begin{equation} \label{formula B}
B(\rho,m) =\int_X \Psi (1,v) \, d\rho =  \frac12 \int_X |v|^2 \, d\rho \,. 
\end{equation}
	\end{enumerate}
\end{lem}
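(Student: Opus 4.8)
The plan is to reduce convexity, positive one-homogeneity and the pointwise statements (i)--(iii) to the corresponding properties of the integrand $\Psi$ via the calculus of Radon--Nikodym densities, and to treat the weak* lower semicontinuity separately through a dual representation, which is the only genuinely nontrivial part. Throughout I would exploit that $B$ is independent of the dominating measure $\lambda$, as already observed after \eqref{intro convex}: given two admissible references one passes to their sum, applies the chain rule for densities, and uses the positive one-homogeneity of $\Psi$.

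Property (i) is immediate from $\Psi \ge 0$ on $\R\times\R^d$, so that the integrand in \eqref{intro convex} is nonnegative. For positive one-homogeneity, fix $\theta>0$ and evaluate $B(\theta\rho,\theta m)$ with the same $\lambda$ used for $(\rho,m)$: the densities scale by $\theta$ and $\Psi(\theta s,\theta w)=\theta\,\Psi(s,w)$ gives $B(\theta\rho,\theta m)=\theta\,B(\rho,m)$. For convexity, fix $(\rho_0,m_0),(\rho_1,m_1)$ and $s\in[0,1]$, choose the common reference $\lambda:=\abs{\rho_0}+\abs{\rho_1}+\abs{m_0}+\abs{m_1}$, and use that the densities of the convex combination are the convex combinations of the densities; the pointwise convexity of $\Psi$ then yields the inequality under the integral sign.

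The main step is the weak* lower semicontinuity, for which I would establish a dual representation of $B$ as a supremum of weak*-continuous affine functionals. First I would compute the closed convex set
\[
K := \left\{ (a,b) \in \R\times\R^d \ : \ a + \tfrac12\abs{b}^2 \le 0 \right\}
\]
and verify the pointwise identity $\Psi(s,w)=\sup\{ as + b\cdot w : (a,b)\in K\}$, checking each branch of $\Psi$: for $s>0$ the supremum is attained at $b=w/s$ and equals $\abs{w}^2/(2s)$; for $s=w=0$ it is $0$; and for $s<0$ or for $s=0,\,w\neq 0$ it equals $+\infty$, matching $\Psi$. Since $\Psi$ is convex, lower semicontinuous and one-homogeneous, this identifies $K$ as the set on which the conjugate $\Psi^*$ vanishes, with $\Psi^*=+\infty$ elsewhere. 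I would then invoke the duality theorem for convex functionals of measures (see \cite{afp}) to lift this pointwise identity to the integral form
\[
B(\rho,m) = \sup\left\{ \int_X a\, d\rho + \int_X b\cdot dm \ : \ (a,b)\in C(X;\R\times\R^d),\ (a(z),b(z))\in K \ \ \forall z\in X \right\}.
\]
Each functional $(\rho,m)\mapsto \int_X a\,d\rho+\int_X b\cdot dm$ is weak*-continuous since $a,b$ are continuous and bounded on $X$; hence $B$ is a supremum of weak*-continuous functionals and is therefore weak*-lower-semicontinuous, and the same representation re-proves convexity and one-homogeneity. The delicate point, and the main obstacle, is justifying the passage from the pointwise supremum to the integral supremum: the nontrivial inequality $B(\rho,m)\le\sup(\cdots)$ requires approximating the optimal field $v=dm/d\rho$ by continuous $K$-valued functions, which is exactly the content of the cited duality/integral-representation result.

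For (ii), assuming $B(\rho,m)<+\infty$ and taking $\lambda:=\abs{\rho}+\abs{m}$, finiteness forces $\Psi(d\rho/d\lambda,dm/d\lambda)<+\infty$ for $\lambda$-a.e.\ point; by the definition of $\Psi$ this requires $d\rho/d\lambda\ge 0$ $\lambda$-a.e.\ (hence $\rho\ge 0$) and $dm/d\lambda=0$ $\lambda$-a.e.\ on $\{d\rho/d\lambda=0\}$. Thus if $\rho(E)=0$ then $d\rho/d\lambda=0$ $\lambda$-a.e.\ on $E$, so $dm/d\lambda=0$ there and $m(E)=0$; this is precisely $m\ll\rho$, and Radon--Nikodym produces the measurable field $v$ with $m=v\rho$. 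Finally, for (iii), when $\rho\ge 0$ and $m=v\rho$ I would simply take $\lambda=\rho$, so that $d\rho/d\lambda=1$ and $dm/d\lambda=v$ hold $\rho$-a.e.; since $\Psi(1,v)=\abs{v}^2/2$, the formula \eqref{formula B} follows at once.
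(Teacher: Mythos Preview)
Your argument is correct and follows the standard route for this class of functionals: reduce the pointwise assertions to properties of $\Psi$, and obtain weak* lower semicontinuity from a dual representation of $B$ as a supremum of linear pairings against continuous $K$-valued test functions (this is essentially Reshetnyak's theorem/Theorem~2.38 in \cite{afp}). The paper, however, does \emph{not} supply a proof of this lemma at all: immediately before Lemma~\ref{lem:prop B} it states that the proofs of Lemmas~\ref{lem:prop B}--\ref{lem:prop J} are recalled from \cite{bf} (Propositions~2.2, 2.4, 2.6 and Lemmas~4.5, 4.6). So there is nothing to compare against within the present paper; your proposal simply supplies what the authors delegate to the reference, and the approach you take is the same one used there.

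One small technical remark: for the weak* lower semicontinuity step, the test functions in your dual formula should be taken in $C_c(X)$ (or $C_0(X)$) rather than $C(X)$, since the weak* topology on $\M(X)$ is defined through the pairing with $C_0(X)$; on the bounded set $X=(0,1)\times\overline\Omega$ this makes no essential difference to the argument, but it is the form in which the cited duality theorem in \cite{afp} is stated.
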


\begin{lem}[Properties of the continuity equation] \label{lem:prop cont}
	Assume that $(\rho,m) \in \M$ satisfies \eqref{cont weak} and that $\rho \in \M^+(X)$. Then $\rho$ disintegrates with respect to time into $\rho =dt \otimes  \rho_t$, 
	where $\rho_t \in \M^+(\overline{\Om})$ for a.e.~$t$. Moreover $t \mapsto \rho_t(\overline{\Om})$ is constant, with $\rho_t(\overline{\Om}) = \rho(X)$ for a.e. $t \in (0,1)$. If in addition $B(\rho,m)< +\infty$, that is, 
	\[
	\int_0^1\int_{\olom} |v|^2 \, d\rho_t(x) \, dt < +\infty\,,
	\]
where $m=v\rho$ for some $v \colon X \to \R^d$ measurable, 
then $t \mapsto \rho_t$ belongs to $\pcurves$. 
\end{lem}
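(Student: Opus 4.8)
The plan is to read off all the structural information directly from the weak continuity equation \eqref{cont weak} by testing against functions of special product form, and to feed the resulting time marginal into the classical disintegration theorem.

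First I would treat the disintegration and the mass conservation together. Let $\pi_t(t,x):=t$ and let $\mu:=(\pi_t)_\#\rho\in\M^+((0,1))$ be the time marginal of $\rho$. Testing \eqref{cont weak} against functions of the form $(t,x)\mapsto\psi(t)$ with $\psi\in C^\infty_c(0,1)$ — admissible because $\olom$ is compact, so these have compact support in $X$ — annihilates the momentum term, since the spatial gradient vanishes, and leaves $\int_0^1\psi'(t)\,d\mu(t)=0$ for all such $\psi$. Hence $\mu$ has vanishing distributional derivative on the connected interval $(0,1)$, so $\mu=c\,dt$ with $c=\mu((0,1))=\rho(X)\geq0$. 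In particular the time marginal of $\rho$ is absolutely continuous, and the disintegration theorem produces a Borel family $\{\rho_t\}_{t\in[0,1]}\subset\M^+(\olom)$ with $\rho=dt\otimes\rho_t$ and $\rho_t(\olom)=c=\rho(X)$ for a.e.\ $t$, which are exactly the first two assertions.

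For the third assertion I would test \eqref{cont weak} against separated functions $(t,x)\mapsto\psi(t)g(x)$ with $\psi\in C^\infty_c(0,1)$ and $g\in C^1(\olom)$; these are admissible because the weak formulation \eqref{cont weak} encodes no-flux boundary conditions, so the spatial factor need not vanish on $\pOm$. Writing $m=v\rho$ via Lemma \ref{lem:prop B} and using $\rho=dt\otimes\rho_t$, this yields
\[
\int_0^1\psi'(t)\left(\int_{\olom}g\,d\rho_t\right)dt=-\int_0^1\psi(t)\left(\int_{\olom}\nabla g\cdot v\,d\rho_t\right)dt,
\]
so the scalar function $G_g(t):=\int_{\olom}g\,d\rho_t$ has distributional derivative $t\mapsto\int_{\olom}\nabla g\cdot v\,d\rho_t$. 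This derivative lies in $L^1(0,1)$: combining $\rho_t(\olom)=c$ with two applications of Cauchy--Schwarz and the finiteness of $B(\rho,m)=\tfrac12\int_0^1\int_{\olom}|v|^2\,d\rho_t\,dt$ bounds $\int_0^1\int_{\olom}|v|\,d\rho_t\,dt\leq c^{1/2}\,(2B(\rho,m))^{1/2}<+\infty$. Hence each $G_g$ admits an absolutely continuous representative on $[0,1]$.

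The main obstacle is to upgrade this pointwise-in-$g$ regularity into a single narrowly continuous curve $t\mapsto\rho_t$. I would fix a countable set $D\subset C^1(\olom)$ that is dense in $C(\olom)$ for the uniform norm (for instance polynomials), obtain for each $g\in D$ an absolutely continuous representative of $G_g$ coinciding with it off a Lebesgue-null set $N_g$, and put $N:=\bigcup_{g\in D}N_g$, still null. On $[0,1]\setminus N$ the functionals $g\mapsto G_g(t)$ are bounded by $\norm{g}_\infty\,c$, so by density and this uniform bound they extend to elements $\tilde\rho_t\in\M^+(\olom)$ coinciding a.e.\ with $\rho_t$; the absolute continuity of the countably many $G_g$ then allows one to define $\tilde\rho_t$ for \emph{every} $t\in[0,1]$ so that $t\mapsto\int_{\olom}g\,d\tilde\rho_t$ is continuous for all $g\in D$, and hence — again by density together with the uniform mass bound $\tilde\rho_t(\olom)=c$ — for all $g\in C(\olom)$. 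This $\{\tilde\rho_t\}$ is the sought representative in $\pcurves$. The delicate points are the consistency of the extension across the common null set $N$ and the transfer of continuity from the dense family $D$ to all of $C(\olom)$; both hinge on the constant-mass property established in the first step, which is precisely why mass conservation must be proven before narrow continuity.
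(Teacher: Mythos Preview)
Your argument is correct and is the standard route to these facts: test the weak continuity equation first against purely temporal functions to force the time marginal to be Lebesgue (giving the disintegration and the constant-mass property simultaneously), then against separated functions $\psi(t)g(x)$ to obtain absolute continuity of $t\mapsto\int g\,d\rho_t$ for a countable dense family of $g$'s, and finally pass to a common narrowly continuous representative via the uniform mass bound. The only points worth tightening in a final write-up are the justification that $\psi(t)g(x)$ with $g\in C^1(\olom)$ is an admissible test function for \eqref{cont weak} (you invoke it, but it requires the approximation remark after \eqref{cont weak}), and the verification that the limiting functional at the exceptional times in $N$ is linear, positive, and bounded by $c\norm{\cdot}_\infty$ (which follows by continuity from the dense complement, as you indicate).

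As for comparison: the paper does not supply its own proof of this lemma but refers to Propositions~2.2, 2.4, 2.6 and Lemmas~4.5, 4.6 of \cite{bf}, where the same circle of ideas is carried out. Your outline is essentially the classical argument (cf.\ \cite[Lemma~8.1.2]{ags}), specialized to the bounded domain $\olom$ with no-flux boundary conditions, so there is no substantive divergence from the cited approach.
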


\begin{lem}[Properties of $J_{\alpha,\beta}$] \label{lem:prop J}
Let $\alpha \geq 0$, $\beta >0$. The functional $J_{\alpha,\beta}$ is non-negative, convex, positively one-homogeneous and sequentially lower semicontinuous with respect to weak* convergence on $\M$. Assume now $\alpha>0$.
		For $(\rho,m) \in \mathcal{M}$ such that $J_{\alpha,\beta}(\rho,m)< +\infty$ we have  
	\begin{equation} \label{lem:prop J est}
          \alpha \norm{\rho}_{\M(X)} \leq J_{\alpha,\beta}(\rho,m)
          \,, \quad \min(2\alpha,\beta) \norm{m}_{\M(X;\R^d)} \leq
          J_{\alpha,\beta}(\rho,m) \,.
	\end{equation}	
	Moreover, if $\{(\rho^n,m^n)\}_n$ is a sequence in $\M$ such that
\[
	\sup_n \ J_{\alpha,\beta} (\rho^n,m^n) < + \infty\,,
\]
then $\rho^n = dt \otimes \rho_t^n$ for some $(t \mapsto \rho_t^n) \in \pcurves$ and there exists some
	$(\rho,m) \in \mathcal{D}$ with $\rho= dt \otimes \rho_t$, $\rho_t \in \pcurves$ such that, up to subsequences, 
	\begin{equation} \label{topology} \left\{
	\begin{gathered}
	(\rho^n,m^n) \weakstar (\rho,m) \,\, \text{ weakly* in } \,\, \M \,, \\
	\rho_t^n \weakstar \rho_t \,\, \text{ weakly* in }  \,\, \M(\olom)\,,\, \text{ for every } \,\, t \in [0,1]\,.   
	\end{gathered} \right.
	\end{equation}
\end{lem}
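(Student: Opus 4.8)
The plan is to derive each structural property of $J_{\alpha,\beta}$ from the corresponding property of $B$ (Lemma~\ref{lem:prop B}) together with the structure of the constraint set $\calD$, and to reserve the bulk of the work for the compactness assertion. Non-negativity is immediate: on $\calD$ we have $J_{\alpha,\beta}=\beta B+\alpha\norm{\rho}_{\M(X)}$ with $\alpha,\beta\ge 0$ and $B\ge 0$ by Lemma~\ref{lem:prop B}(i), while off $\calD$ the value is $+\infty$. For convexity and positive one-homogeneity I would observe that $\calD$ is a linear subspace of $\M$, since the continuity equation \eqref{cont weak} is linear and homogeneous in $(\rho,m)$; hence $\calD$ is a convex cone, its indicator is convex and positively one-homogeneous, and since $B$ is convex and positively one-homogeneous (Lemma~\ref{lem:prop B}) and $\norm{\cdot}_{\M(X)}$ is a norm, the nonnegative combination $J_{\alpha,\beta}$ inherits both properties. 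For weak* sequential lower semicontinuity I would use that $\calD$ is weak* sequentially closed: if $(\rho^n,m^n)\weakstar(\rho,m)$ and each satisfies \eqref{cont weak}, then passing to the limit against any fixed $\f\in C^\infty_c(X)$ gives $(\rho,m)\in\calD$, so the indicator of $\calD$ is weak* lsc; combined with the weak* lsc of $B$ (Lemma~\ref{lem:prop B}) and of the total variation norm, this yields the claim.

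Next I would prove the estimates \eqref{lem:prop J est} for $\alpha>0$. The first is trivial since $\beta B\ge 0$. For the second, by Lemma~\ref{lem:prop B}(ii)--(iii) we may write $m=v\rho$ with $\rho\ge 0$ and $B(\rho,m)=\tfrac12\int_X|v|^2\,d\rho$, so that $\norm{m}_{\M(X;\R^d)}=\int_X|v|\,d\rho$. Applying Young's inequality $|v|\le\tfrac12(1+|v|^2)$ pointwise and integrating against $\rho$ gives
\[
\norm{m}_{\M(X;\R^d)}\le\tfrac12\norm{\rho}_{\M(X)}+\tfrac12\int_X|v|^2\,d\rho\,;
\]
multiplying by $\min(2\alpha,\beta)$ and using $\tfrac12\min(2\alpha,\beta)\le\alpha$ together with $\tfrac12\min(2\alpha,\beta)\le\tfrac\beta2$ bounds the right-hand side by $\alpha\norm{\rho}_{\M(X)}+\beta B(\rho,m)=J_{\alpha,\beta}(\rho,m)$.

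Finally I would establish the compactness statement, which is the main obstacle. From \eqref{lem:prop J est} the hypothesis $\sup_n J_{\alpha,\beta}(\rho^n,m^n)<\infty$ yields uniform bounds on $\norm{\rho^n}_{\M(X)}$ and $\norm{m^n}_{\M(X;\R^d)}$, so by the sequential Banach--Alaoglu theorem a subsequence satisfies $(\rho^n,m^n)\weakstar(\rho,m)$ in $\M$; weak* closedness of $\calD$ gives $(\rho,m)\in\calD$, and weak* lsc of $J_{\alpha,\beta}$ gives $B(\rho,m)<\infty$. Each $\rho^n$ and the limit $\rho$ are then nonnegative (Lemma~\ref{lem:prop B}(ii)) and, by Lemma~\ref{lem:prop cont}, disintegrate as $dt\otimes\rho_t^n$ and $dt\otimes\rho_t$ with $(t\mapsto\rho_t^n),(t\mapsto\rho_t)\in\pcurves$. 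The delicate point is the pointwise-in-time convergence, which requires a uniform modulus of continuity for the curves. Writing $m^n=v^n\rho^n$ and testing \eqref{cont weak} against products $\psi(t)\f(x)$, one finds for $\f\in C^1(\olom)$ and $0\le s<t\le 1$, via Cauchy--Schwarz and the constancy of $\tau\mapsto\rho_\tau^n(\olom)=\rho^n(X)$ (Lemma~\ref{lem:prop cont}),
\[
\left|\int_{\olom}\f\,d\rho_t^n-\int_{\olom}\f\,d\rho_s^n\right|\le\norm{\nabla\f}_\infty\int_s^t\!\!\int_{\olom}|v^n|\,d\rho_\tau^n\,d\tau\le\norm{\nabla\f}_\infty\big(2B(\rho^n,m^n)\,\rho^n(X)\big)^{1/2}|t-s|^{1/2}\,.
\]
Since $B(\rho^n,m^n)$ and $\rho^n(X)$ are uniformly bounded, the curves $t\mapsto\rho_t^n$ are uniformly $\tfrac12$-Hölder in a metric metrizing weak* convergence on the weak* compact ball of $\M(\olom)$, and an Arzelà--Ascoli argument extracts a further subsequence with $\rho_t^n\weakstar\tilde\rho_t$ for every $t\in[0,1]$. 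It then remains to identify $\tilde\rho_t=\rho_t$: testing $\rho^n\weakstar\rho$ against products $\psi(t)\f(x)$ and passing to the limit (dominated convergence in $t$, using the uniform mass bound) shows $dt\otimes\tilde\rho_t=\rho=dt\otimes\rho_t$, whence $\tilde\rho_t=\rho_t$ for a.e.\ $t$ and, by narrow continuity of both curves, for every $t$. This gives \eqref{topology} and completes the proof.
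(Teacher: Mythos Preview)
Your argument is correct and follows the natural line one would expect. Note, however, that the paper does not actually prove this lemma: immediately before stating it, the authors write ``For proofs of such results, we refer the interested reader to Propositions 2.2, 2.4, 2.6 and Lemmas 4.5, 4.6 in \cite{bf}.'' Hence there is no in-paper proof to compare against; your write-up supplies what the paper omits. The structure you chose---reading off convexity, one-homogeneity and weak* lower semicontinuity from Lemma~\ref{lem:prop B} together with linearity and weak* closedness of $\calD$, deriving \eqref{lem:prop J est} via Young's inequality $|v|\le\tfrac12(1+|v|^2)$, and obtaining the pointwise-in-time compactness from a uniform $\tfrac12$-H\"older estimate plus Arzel\`a--Ascoli on a metrizable weak* ball---is precisely the standard route and is consistent with how the cited reference proceeds. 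Two minor points worth tightening in a final version: the H\"older estimate is shown for $\f\in C^1(\olom)$ and should be passed to a metric (e.g.\ the bounded-Lipschitz distance) by density; and the identity $\int_{\olom}\f\,d\rho_t^n-\int_{\olom}\f\,d\rho_s^n=\int_s^t\int_{\olom}\nabla\f\cdot v^n\,d\rho_\tau^n\,d\tau$ for \emph{all} $s,t\in[0,1]$ uses that $t\mapsto\int_{\olom}\f\,d\rho_t^n$ has an $L^1$ weak derivative and is narrowly continuous (Lemma~\ref{lem:prop cont}), not just the distributional identity against $\psi\in C_c^\infty((0,1))$. Neither point affects the validity of the proof.
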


\section{Characterization of extremal points}\label{sec:characterizationextremal}

The aim of this section is to characterize the extremal points of the unit ball of the functional $J_{\alpha,\beta}$ at \eqref{prel:reg} for all $\alpha \geq 0, \beta>0$, namely, of the convex set
\[
C_{\alpha,\beta} := \left\{(\rho,m) \in \M \, \colon \, J_{\alpha,\beta}(\rho,m)  \leq 1 \right\}\,. 
\]
To this end, let us first introduce the following set.
\begin{definition}[Characteristics] \label{def:char}
For $\alpha\geq 0$, $\beta > 0$ define the set $\points$ of all the pairs $(\rho,m) \in \mathcal{M}$ such that
\[
\rho=a_\gamma \, dt \otimes  \delta_{\gamma(t)}\,, \,\,\,\,\, m=\dot{\gamma}(t)\, a_\gamma \, dt \otimes  \delta_{\gamma(t)} \,, \,\,\,\,\,
a_{\gamma} := \left( \frac{\beta}{2} \int_0^1 |\dot\gamma(t)|^2 \, dt + \alpha \right)^{-1}\,,
\] 
where $\gamma \in \AC^2([0,1];\R^d)$ satisfies $\gamma(t) \in \olom$ for each $t \in [0,1]$ and $a_{\gamma}<+\infty$.  
\end{definition} 
We remind that $\AC^2([0,1];\R^d)$ denotes the space of absolutely continuous curves having a weak derivative in $L^2$.
We point out that by definition $a_{\gamma} >0$ for all choices of $\alpha\geq 0$, $\beta>0$. Moreover the condition $a_{\gamma}<+\infty$ is always satisfied if $\alpha>0$. When $\alpha=0$ we instead have $a_{\gamma}<+\infty$ if and only if $\int_0^1 |\dot{\gamma}(t)|^2\, dt > 0$, that is, the set $\mathcal{C}_{0,\beta}$ does not contain constant curves.

For the extremal points of $C_{\alpha,\beta}$ we have the following characterization.
\begin{thm} \label{thm:extremal}
Let $\alpha\geq 0$, $\beta>0$ be fixed. Then
	\[
	\ext(C_{\alpha,\beta})=\{(0,0) \} \cup \points \,.
	\]
\end{thm}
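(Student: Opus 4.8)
The plan is to prove the two inclusions separately, relying crucially on the superposition principle (Theorem~\ref{thm:lifting}) to handle the difficult direction.

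\textbf{Step 1: $\points \cup \{(0,0)\} \subseteq \ext(C_{\alpha,\beta})$.} First I would check that every element of $\points$ (and trivially $(0,0)$) lies on the boundary of $C_{\alpha,\beta}$, i.e.\ $J_{\alpha,\beta}(\rho,m) = 1$: this is a direct computation using Lemma~\ref{lem:prop B}(iii) and the definition of $a_\gamma$, together with the fact that $(\rho,m) \in \calD$ because $\gamma$ is a characteristic. Then, to show extremality, suppose $(\rho,m) = \tfrac12(\rho^1,m^1) + \tfrac12(\rho^2,m^2)$ with $(\rho^i,m^i) \in C_{\alpha,\beta}$. Since $J_{\alpha,\beta}$ is convex, one-homogeneous, and equals $1$ at $(\rho,m)$, both $(\rho^i,m^i)$ must also satisfy $J_{\alpha,\beta}(\rho^i,m^i) = 1$, and moreover equality must hold in the convexity inequality for $B$ and for the total variation simultaneously. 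The strict convexity of the map $x \mapsto |x|^2/(2t)$ (for the $B$ part) forces $\rho^1$ and $\rho^2$ to have the same velocity field as $\rho$, while the fact that $\rho = a_\gamma\, dt\otimes\delta_{\gamma(t)}$ is concentrated on a single curve means that $\rho^1, \rho^2 \ll \rho$ are also concentrated on that curve; combining with the mass normalization $\rho^i(X) = a_\gamma$ (forced by equality in the TV estimate and $J_{\alpha,\beta} = 1$) gives $\rho^1 = \rho^2 = \rho$. The cleanest way to organize this is probably to argue that $\rho^i = c_i \rho$ for constants $c_i$ with $c_1 + c_2 = 2$, and that $J_{\alpha,\beta}(\rho^i, m^i) = c_i$ by one-homogeneity, whence $c_i = 1$.

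\textbf{Step 2: $\ext(C_{\alpha,\beta}) \subseteq \points \cup \{(0,0)\}$.} Let $(\rho,m)$ be an extremal point with $(\rho,m) \neq (0,0)$; I must show it belongs to $\points$. Since $J_{\alpha,\beta}(\rho,m) < \infty$, Lemma~\ref{lem:prop B} gives $\rho \geq 0$, $m = v\rho$, and Lemma~\ref{lem:prop cont} gives the disintegration $\rho = dt\otimes\rho_t$ with $t\mapsto\rho_t$ narrowly continuous. Apply the superposition principle (Theorem~\ref{thm:lifting}): there is a probability measure $\eta$ on $\AC^2([0,1];\olom)$ such that $\rho = \int a_\gamma^{-1} (\text{something})\, d\eta(\gamma)$ — more precisely $\rho_t = \int \delta_{\gamma(t)}\, d\mu(\gamma)$ for an appropriate (non-normalized) measure $\mu$ over curves, and $m$ decomposes correspondingly via $\dot\gamma$. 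Then I would write $(\rho,m)$ as a superposition (generalized convex combination) of the elementary pairs $(\rho^\gamma, m^\gamma) := (a_\gamma\, dt\otimes\delta_{\gamma(t)}, \dot\gamma a_\gamma\, dt\otimes\delta_{\gamma(t)}) \in \points \subseteq C_{\alpha,\beta}$, with weights proportional to $a_\gamma^{-1}$ integrated against $\eta$. A short computation using the linearity of the continuity equation and Jensen / the one-homogeneity of $J_{\alpha,\beta}$ shows that the total weight equals $J_{\alpha,\beta}(\rho,m) \leq 1$, and that in fact $J_{\alpha,\beta}(\rho,m) = \int a_\gamma^{-1}\, d(\text{weight})$ with no loss. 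Since $(\rho,m)$ is extremal, the decomposing measure on $\points$ must be a single Dirac mass on one curve $\gamma$, giving $(\rho,m) = (\rho^\gamma, m^\gamma) \in \points$ (after checking $J_{\alpha,\beta}(\rho,m)=1$, which follows because extremal points of a closed convex set with $0$ in its interior direction lie on the boundary — or simply because $(0,0)$ is the only element of $C_{\alpha,\beta}$ with $J_{\alpha,\beta} < 1$ that could be extremal, as any $(\rho,m)$ with $0 < J_{\alpha,\beta}(\rho,m) < 1$ is a proper convex combination of $(0,0)$ and $(\rho,m)/J_{\alpha,\beta}(\rho,m)$).

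\textbf{Main obstacle.} The delicate point is Step 2: making rigorous the passage from "superposition over curves" to "proper convex combination contradicting extremality". One must be careful that the representing measure $\eta$ genuinely charges more than one curve when $(\rho,m) \notin \points$, and that the barycenter identity $\int (\rho^\gamma,m^\gamma)\, d(\text{normalized weight})(\gamma) = (\rho,m)$ holds in the weak* sense with the weights summing to exactly $J_{\alpha,\beta}(\rho,m)$ — this requires that the energy splits additively along the superposition, i.e.\ that $\int_0^1 |v(t,\gamma(t))|^2\,dt = \int_0^1|\dot\gamma(t)|^2\,dt$ for $\eta$-a.e.\ $\gamma$ and that the $B$-energy of the superposition equals the superposition of the energies (no cancellation), which is exactly the content built into Theorem~\ref{thm:lifting}. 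Handling the normalization carefully when some curves are constant (relevant only if $\alpha = 0$, where constant curves are excluded from $\points$ but also carry zero mass in the relevant sense) is a minor technical wrinkle to address at the end.
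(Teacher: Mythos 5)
Your overall strategy coincides with the paper's: Part~1 by convexity plus the structure of measure solutions of the continuity equation, Part~2 via the superposition principle of Theorem~\ref{thm:lifting}. Step~1 is essentially the paper's argument and is sound in outline (the paper obtains $v^j=\dot\gamma$ on $\gr(\gamma)$ by testing the continuity equation of $(\rho^j,m^j)$ against $\f(t,x)=x_i\psi(t)$ rather than by equality in the convexity of the perspective function $\Psi$, but both routes work; note also that the identification $\rho^j=c_j\rho$ with $c_j$ a \emph{constant} is not forced by absolute continuity alone --- it uses the constancy in time of $t\mapsto\rho^j_t(\olom)$ from Lemma~\ref{lem:prop cont}).

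The genuine gap is in Step~2, exactly at the point you flag as the ``main obstacle'' but do not resolve: passing from ``the representing measure $\sigma$ is not a Dirac'' to ``$(\rho,m)$ is a proper convex combination of two \emph{distinct} elements of $C_{\alpha,\beta}$''. This is not automatic, because the superposition representation is highly non-unique: two different measures on curve space can push forward to the same pair $(\rho,m)$ (crossing versus bouncing particles, for instance), so splitting $\sigma$ into two pieces of positive mass need not produce distinct barycenters, and without distinctness there is no contradiction with extremality. The paper supplies the missing mechanism: it argues by contradiction on $\supp\rho_t$, chooses a time $\hat t$ at which $\supp\rho_{\hat t}$ is not a singleton, splits $\sigma$ along $A=e_{\hat t}^{-1}(E)$, normalizes each half by its energy $\lambda_j$ so that $J_{\alpha,\beta}(\rho^j,m^j)=1$, and uses narrow continuity of $t\mapsto (e_t)_\#\sigma_j$ (Lemma~\ref{lem:prop cont} again) to conclude that the two halves differ at $t=\hat t$. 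A second, related gap is the case $\alpha=0$: your weights are proportional to $a_\gamma^{-1}$, which \emph{vanishes} on constant curves, so your barycenter formula fails to reconstruct $(\rho,m)$ whenever $\sigma$ charges the set $Z$ of constant curves; proving $\sigma(Z)=0$ is not a normalization wrinkle but requires its own extremality argument (the paper's decomposition $\sigma=\sigma\zak Z+\sigma\zak Z^c$ and the associated convex splitting of $(\rho,m)$). Finally, after the singleton claim one must still verify $\gamma\in\AC^2([0,1];\R^d)$, $v(t,\gamma(t))=\dot\gamma(t)$ a.e., and $a=a_\gamma$; these follow as in the paper but are not free.
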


The proof of Theorem \ref{thm:extremal} is postponed to Section \ref{subsection:proof}. In order to show the inclusion  
$\ext(C_{\alpha,\beta}) \subset \{(0,0)\} \cup \points$ we will make 
use of a superposition principle for measure solutions of the continuity equation \eqref{cont weak}. This result is not new, and it is proved in \cite[Ch 8.2]{ags} for the case $\Om = \R^d$. In Section \ref{subsec:superposition}  we show that it also holds for bounded closed domains.

\subsection{The superposition principle} \label{subsec:superposition}

 Before stating the superposition principle in $\olom$, we introduce the following notation.  
Let
\[
\Gamma := \left\{ \gamma \colon [0,1] \to \R^d  \, \colon \, \gamma \text{ continuous} %
\right\}
\]
be equipped with the supremum norm, i.e., $\norm{\gamma}_{\infty}:=\max_{t \in [0,1]} |\gamma(t)|$. For every fixed $t \in [0,1]$ let $e_t \colon \Gamma \to \R^d$ be the evaluation at $t$, that is,  $e_t(\gamma):=\gamma(t)$. Notice that $e_t$ is continuous.  For a measurable vector field $v \colon (0,1) \times \R^d \to \R^d$, we define the following subset of $\Gamma$ consisting of $\AC^2$ curves solving the ODE \eqref{ode} in the sense of Carath\'eodory:
\[
\Gamma_{v}(\R^d) := \left\{ \gamma \in \Gamma  \, \colon \, \gamma \in \AC^2([0,1];\R^d),\,\,  \dot{\gamma}(t)=v(t,\gamma(t)) \text{ for a.e. } t \in (0,1) \right\}\,.
\]
Moreover define the set of solutions to the ODE which live inside $\olom$ for all times:
\[
\Gamma_{v}(\olom) := \left\{ \gamma \in \Gamma_v (\R^d)\, \colon \, \gamma(t) \in \olom \,\, \text{ for all } \,\, t \in [0,1]
\right\}  \,.
\]
The superposition principle for probability solutions to \eqref{cont weak} states as follows.

\begin{thm} \label{thm:lifting}
Let $t \in [0,1] \mapsto \rho_t \in \prob (\olom)$ be a narrowly continuous solution of the continuity equation in the sense of \eqref{cont weak}, for some measurable $v \colon (0,1) \times \olom \to \R^d$ such that 
\begin{equation} \label{thm:lifting:1}
\int_0^1 \int_{\olom} |v(t,x)|^2 \, d\rho_t (x) \, dt < + \infty\,.
\end{equation}
Then there exists a probability measure $\sigma \in \prob(\Gamma)$ concentrated on $\Gamma_{v}( \olom)$ and such that $\rho_t = (e_t)_\# \sigma$ for every $t \in [0,1]$, that is,
\begin{equation} \label{representation:1}
\int_{\olom} \f (x) \, d\rho_t (x) = \int_\Gamma \f (\gamma(t)) \, d\sigma(\gamma) \quad \text{ for every } \,\, \f \in C(\olom),\,\, t \in [0,1]\,. 
\end{equation}
\end{thm}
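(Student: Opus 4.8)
The plan is to reduce the statement to the known superposition principle on $\R^d$ (as in \cite[Ch.~8.2]{ags}, specifically \cite{ambrosioinventiones}) by a suitable extension of the data from $\olom$ to the whole space, and then to show that the resulting probability measure on curves is automatically concentrated on curves that never leave $\olom$.

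\smallskip

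\textbf{Step 1: Extension to $\R^d$.} First I would extend $\rho_t$ and $v$ trivially outside $\olom$: since each $\rho_t$ is a probability measure supported in $\olom$, we may regard $t\mapsto\rho_t$ as a narrowly continuous curve in $\prob(\R^d)$, and extend $v$ by $v(t,x):=0$ for $x\notin\olom$ (measurability is preserved). The integrability condition \eqref{thm:lifting:1} is unchanged since $\rho_t$ charges only $\olom$. The only subtle point is that $(\rho_t,v\rho_t)$ solves the continuity equation on $X=(0,1)\times\olom$ only against test functions in $C_c^\infty(X)$, i.e.\ those vanishing near $\p\Om$ \emph{and} near $t=0,1$; to invoke the superposition principle on $\R^d$ we need \eqref{cont weak} to hold against all $\f\in C_c^\infty((0,1)\times\R^d)$. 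This is exactly the content of the no-flux boundary condition encoded in the weak formulation: for $\f\in C_c^\infty((0,1)\times\R^d)$ arbitrary, the integrals $\int_X\p_t\f\,d\rho+\int_X\nabla\f\cdot dm$ only see the values of $\f$ on $\olom$, and one checks (by an approximation argument, cutting off $\f$ near $\p\Om$, using $\rho_t(\p\Om)$ contributions vanish and that $m$ is concentrated on $\olom$ with no boundary flux) that the identity extends. Here I would want to be a bit careful and perhaps argue via the disintegrated/continuous-in-time form $\frac{d}{dt}\int\f\,d\rho_t=\int\nabla\f\cdot v\,d\rho_t$, which, after extending $v$ by zero, holds for every $\f\in C_c^\infty(\R^d)$ and a.e.\ $t$, and that is the hypothesis needed.

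\smallskip

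\textbf{Step 2: Apply the Euclidean superposition principle.} With the extended data, \cite[Theorem~8.2.1]{ags} (or \cite{ambrosioinventiones}, using the $L^2$ bound \eqref{thm:lifting:1} which in particular gives $\int_0^1\int_{\R^d}|v|\,d\rho_t\,dt<\infty$ and hence the required growth/integrability) yields a probability measure $\sigma\in\prob(\Gamma(\R^d))$ concentrated on the absolutely continuous solutions of $\dot\gamma(t)=v(t,\gamma(t))$ with $\rho_t=(e_t)_\#\sigma$ for every $t\in[0,1]$. The $L^2$ bound moreover upgrades $\AC$ to $\AC^2$: indeed $\int_\Gamma\int_0^1|\dot\gamma(t)|^2\,dt\,d\sigma(\gamma)=\int_0^1\int_{\R^d}|v(t,x)|^2\,d\rho_t(x)\,dt<\infty$ by the pushforward identity applied with the (nonnegative, possibly $+\infty$-valued, lower semicontinuous) functional $\gamma\mapsto\int_0^1|\dot\gamma|^2$, so $\sigma$-a.e.\ $\gamma$ lies in $\AC^2([0,1];\R^d)$. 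Hence $\sigma$ is concentrated on $\Gamma_v(\R^d)$.

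\smallskip

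\textbf{Step 3: Curves stay in $\olom$.} It remains to show $\sigma$ is concentrated on $\Gamma_v(\olom)$, i.e.\ $\sigma$-a.e.\ curve satisfies $\gamma(t)\in\olom$ for all $t$. Fix a countable dense set $\{t_k\}\subset[0,1]$. For each $k$, $(e_{t_k})_\#\sigma=\rho_{t_k}$ is supported in $\olom$, so $\sigma(\{\gamma:\gamma(t_k)\notin\olom\})=0$; taking the countable union, $\sigma$-a.e.\ $\gamma$ satisfies $\gamma(t_k)\in\olom$ for all $k$. Since such $\gamma$ is continuous and $\olom$ is closed, $\gamma(t)\in\olom$ for all $t\in[0,1]$. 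Therefore $\sigma$ is concentrated on $\Gamma_v(\olom)$, and \eqref{representation:1} follows by restricting the pushforward identity to $\f\in C(\olom)$ (extended arbitrarily to a bounded continuous function on $\R^d$, the value of $\int_\Gamma\f(\gamma(t))\,d\sigma$ depending only on $\f|_{\olom}$ since $\sigma$ lives on curves in $\olom$).

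\smallskip

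\textbf{Main obstacle.} The only genuine difficulty is Step~1: passing from the weak formulation \eqref{cont weak} tested only against $C_c^\infty(X)$ — which secretly encodes the homogeneous Neumann (no-flux) condition on $\p\Om$ — to a genuine distributional solution on $(0,1)\times\R^d$ after extending by zero. One must verify that extending $\rho$ and $m$ by zero across $\p\Om$ does not create a singular source term on $\p\Om$; this uses precisely that the original formulation built in the no-flux condition, and is cleanest to see through the time-continuous form $\frac{d}{dt}\int_{\olom}\f\,d\rho_t=\int_{\olom}\nabla\f\cdot v\,d\rho_t$ valid for all $\f\in C^1(\olom)$ (Lemma~\ref{lem:prop cont} gives narrow continuity of $t\mapsto\rho_t$, and a standard argument promotes \eqref{cont weak} to this form), which, read for $\f|_{\olom}$ with $\f\in C_c^\infty(\R^d)$, is exactly the hypothesis of the Euclidean superposition theorem after the zero-extension of $v$. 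Steps~2 and~3 are then routine.
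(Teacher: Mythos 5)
Your proposal is correct and follows the same overall reduction as the paper: extend $\rho_t$ and $v$ by zero to $(0,1)\times\R^d$, invoke the Euclidean superposition principle \cite[Theorem~8.2.1]{ags}, and then show that $\sigma$ is concentrated on curves that never leave $\olom$. The one place where you genuinely diverge is this last concentration step. The paper exploits the specific zero-extension of $v$: since $\bar v\equiv 0$ on the open set $\olom^c$, any solution of $\dot\gamma=\bar v(t,\gamma)$ that ever visits $\olom^c$ must be constant, hence equals $\gamma(0)\in\olom^c$, and such curves form a $\sigma$-null set because $\bar\rho_0(\olom^c)=0$. Your argument instead uses only that each marginal $(e_{t_k})_\#\sigma=\bar\rho_{t_k}$ is supported in the closed set $\olom$ for a countable dense set of times, plus continuity of the curves; this is equally valid, slightly more robust (it does not depend on how $v$ is extended), and arguably cleaner. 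One small correction to your Step~1: since $\olom$ is compact and $X=(0,1)\times\olom$, functions in $C_c^\infty(X)$ are \emph{not} required to vanish near $\partial\Om$ — compact support only constrains the time variable — so the passage to test functions in $C_c^\infty((0,1)\times\R^d)$ is immediate by restriction (the measures $\bar\rho,\bar m$ charge only $(0,1)\times\olom$), and no boundary cut-off or appeal to the time-continuous form is needed; the paper accordingly treats this as a one-line observation. Your fallback via $\frac{d}{dt}\int\f\,d\rho_t=\int\nabla\f\cdot v\,d\rho_t$ would also work, but the ``main obstacle'' you identify is not actually an obstacle.
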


\begin{proof}
Let $\bar v :(0,1) \times \R^d \rightarrow \R^d$ be the extension to zero of $v$ to the whole $\R^d$.
Similarly, for each $t \in [0,1]$, let $\bar{\rho}_t \in \prob(\R^d)$ be the extension to zero of $\rho_t$ in $\R^d$. Note that the pair $(\bar \rho, \bar v \, \bar \rho)$ is a solution of the continuity equation in $(0,1) \times \R^d$ in the sense of \eqref{cont weak}. Moreover $\bar{\rho}$ and $\bar{v}$ satisfy \eqref{thm:lifting:1} in $(0,1) \times \R^d$. Therefore we can apply Theorem 8.2.1 in \cite{ags} and obtain a probability measure $\sigma \in \prob (\Gamma)$ concentrated on $\Gamma_{\bar{v}} (\R^d)$ and such that $\bar{\rho}_t = (e_t)_\# \sigma$ for all $t \in [0,1]$, that is,
\begin{equation} \label{representation:2}
\int_{\R^d} \f (x) \, d\bar{\rho}_t (x) = \int_\Gamma \f (\gamma(t)) \, d\sigma(\gamma) \quad \text{ for every } \,\, \f \in C_b(\R^d),\,\, t \in [0,1]\,.
\end{equation}
We claim that $\sigma$ is concentrated on $\Gamma_{v}(\olom)$. In order to show that, partition $\Gamma_{\bar{v}} (\R^d)$ into 
\[
\Gamma_{\bar{v}}(\R^d) = \Gamma_{\bar{v}}(\olom) \cup A \,,
\]
where
\[
A:= \left\{ \gamma \in \Gamma_{\bar{v}}(\R^d) \, \colon \, \text{there exists } \,\, \hat{t} \in [0,1] \,\, \text{such that} \,\, \gamma(\hat{t}) \in \olom^c \right\} \,.
\]
Notice that, since $\olom^c$ is open and $v \equiv  0$ in $\olom^c$, the curves in $A$ are constant, so that we can write
\[
A = \left\{ \gamma \in \Gamma_{\bar{v}}(\R^d) \, \colon \,  \gamma(0) \in \olom^c \right\} \,.
\]
From this, it follows that $A \subset e_0^{-1}(\olom^c)$. Moreover, \eqref{representation:2} implies $\bar{\rho}_0 (\olom^c) = \sigma(e_0^{-1}(\olom^c))$. Therefore, using that $\bar{\rho}_t$ is concentrated on $\olom$, we conclude that $\sigma(A)=0$, showing that $\sigma$ is concentrated on $\Gamma_{\bar{v}}(\olom)$. 
Finally, \eqref{representation:2} implies \eqref{representation:1} since $\bar{\rho}_t$ is supported in $\olom$ and it coincides with $\rho_t$ in $\olom$. Also $\Gamma_{\bar{v}}(\olom)=\Gamma_{v}(\olom)$ by definition of $\bar{v}$, thus concluding the proof.
\end{proof}

\subsection{Proof of Theorem \ref{thm:extremal}} \label{subsection:proof}

Let $\alpha \geq 0$, $\beta >0$. We divide the proof into two parts.

\smallskip

\noindent\textit{Part 1:} $\{ (0,0)\} \cup \points \subset \ext(C_{\alpha,\beta})$.
\smallskip

\noindent We start by showing that $\{(0,0)\}\cup \points \subset C_{\alpha,\beta}$. The fact that $(0,0) \in C_{\alpha,\beta}$ follows immediately, since $(0,0)$ solves the continuity equation and $J_{\alpha,\beta}(0,0)=0$ (by Lemma \ref{lem:prop B}). Consider now $(\rho,m) \in \points$.
Notice that $(\rho,m) \in \mathcal{C}_{\alpha,\beta}$ satisfies the continuity equation in the sense of \eqref{cont weak}: indeed for every $\f \in C^1_c ((0,1) \times \olom)$ we have
\begin{equation} \label{lem:inclusion:13}
\begin{aligned}
\int_{(0,1) \times \olom}  \de_t \f \, d \rho +  \nabla \f \cdot dm & = a_{\gamma} \int_0^1  \de_t \f (t,\gamma(t)) + \nabla \f (t,\gamma(t)) \cdot \dot{\gamma}(t) \, dt 
  \\
& = a_{\gamma} \int_0^1 \frac{d}{dt} \f (t,\gamma(t)) \, dt =a_{\gamma} ( \f (1,\gamma(1)) - \f(0,\gamma(0)) ) = 0
\end{aligned}
\end{equation}
since $\f$ is compactly supported in $(0,1) \times \olom$. 
Moreover, thanks to the fact that $\rho \geq 0$ and $m=\dot{\gamma} \rho$, we can invoke \eqref{formula B} to obtain
\begin{equation} \label{inclusion:one}
J_{\alpha,\beta}(\rho,m)  
= a_{\gamma} \left( \frac{\beta}{2} \int_0^1  |\dot{\gamma}(t)|^2  \, dt + \alpha\right) = 1\,,
\end{equation}
proving that $(\rho,m) \in C_{\alpha,\beta}$.

We now want to show that any $(\rho,m) \in \{(0,0)\} \cup \points$ is an extremal point for $C_{\alpha,\beta}$. Hence assume that $(\rho^1,m^1), (\rho^2,m^2) \in C_{\alpha,\beta}$ are such that 
\begin{equation}\label{lem:inclusion:1}
	(\rho,m)=\lambda (\rho^1,m^1) + (1-\lambda) (\rho^2,m^2)
\end{equation}
for some $\lambda \in (0,1)$. We need to show that $(\rho,m)=(\rho^1,m^1) = (\rho^2,m^2)$.
Set $j \in \{1,2\}$.
Since $(\rho^j,m^j)$ is such that $J_{\alpha,\beta}(\rho^j,m^j) \leq 1$, from $ii)$ in Lemma \ref{lem:prop B} we have that $\rho^j \geq 0$ and $m^j=v^j\, \rho^j $ for some Borel field $v^j \colon X \to \R^d$. In particular, if $(\rho,m)=(0,0)$, \eqref{lem:inclusion:1} forces $(\rho^j,m^j)=0$, hence showing that $(0,0)$ is an extremal point of $C_{\alpha,\beta}$. 

Let us now consider the case $(\rho,m) \in \points$.
By \eqref{inclusion:one} we have $J_{\alpha,\beta}(\rho,m)=1$. 
From \eqref{lem:inclusion:1}, convexity of $J_{\alpha,\beta}$, and the fact that $J_{\alpha,\beta}(\rho^j,m^j) \leq 1$, $\lambda \in (0,1)$, we conclude
\begin{equation} \label{lem:inclusion:11} 
J_{\alpha,\beta}(\rho^j,m^j)=1\,.
\end{equation}
Since $(\rho^j,m^j)$ solves the continuity equation, $\rho^j \geq 0$ and $J_{\alpha,\beta}(\rho^j,m^j) = 1$, from Lemma \ref{lem:prop cont} we deduce that $\rho^j=dt \otimes \rho_t^j$ for some narrowly continuous curve $t \mapsto \rho^j_t \in \M^+(\olom)$, with $\rho^j_t(\olom)$ constant in time. We define $a_j:=\rho^j_0(\olom)$ and notice that $a_j >0$: Indeed, $a_j=0$ would imply $\rho^j=0$, yielding $J_{\alpha,\beta}(\rho^j,m^j)=J_{\alpha,\beta}(0,0)=0$. This would contradict \eqref{lem:inclusion:11}.    
Now, from condition \eqref{lem:inclusion:1} 
and uniqueness of the disintegration we deduce
\begin{equation} \label{lem:inclusion:6}
a_{\gamma}\,\delta_{\gamma(t)} = \lambda  \rho_t^1 + (1-\lambda) \rho_t^2  \quad \text{ for every } t  \in [0,1] \,.
\end{equation}
Since $a_j >0$ (and hence $\rho^j_t \neq 0$), the above equality implies that $\supp \rho_t^j = \{\gamma(t)\}$, i.e.,
\begin{equation} \label{lem:inclusion:12}
\rho_t^j =a_j \,\delta_{\gamma(t)} \quad \text{ for every } t  \in [0,1] \,.
\end{equation}
We now show that $v^j=\dot{\gamma}$ on $\supp \rho = \gr(\gamma):=\{ (t,\gamma(t)) \colon t \in (0,1)\}$, that is
\begin{equation} \label{lem:inclusion:field}
	v^j(t,\gamma(t))= \dot\gamma(t) \quad \text{ for a.e. } \,\, t \in (0,1)\,.
\end{equation}
By assumption, $\de_t \rho^j + \div m^j=0$ in the sense of \eqref{cont weak}. %
Therefore, recalling \eqref{lem:inclusion:12} and the fact that $a_j >0$, we get that for each $\f \in C^1_c((0,1) \times \olom)$,
\begin{equation} \label{lem:inclusion:3}
\begin{aligned}
0 & = \int_0^1 \de_t \f (t, \gamma (t)) + \nabla \f (t,\gamma(t)) \cdot v^j(t, \gamma(t)) \, dt  \\
& =  \int_0^1 \de_t \f (t, \gamma (t)) + \nabla \f (t,\gamma(t)) \cdot \dot{\gamma}(t) \, dt \, + 
 \int_0^1 \nabla \f (t,\gamma(t)) \cdot ( v^j(t, \gamma(t)) - \dot{\gamma}(t) ) \, dt \\
& = \int_0^1 \nabla \f (t,\gamma(t)) \cdot ( v^j(t, \gamma(t)) - \dot{\gamma}(t) ) \, dt\,,
\end{aligned}
\end{equation}
where the last equality follows from \eqref{lem:inclusion:13}, since $a_\gamma >0$. Let $\psi \in C^1_c ((0,1))$ and define $\f(t,x):=x_i \psi (t) $, where $x=(x_1,\dots,x_d)$, so that $\f$ is a test function for \eqref{lem:inclusion:3}. By plugging $\f$ into \eqref{lem:inclusion:3} we obtain
\[
\int_0^1  \psi (t) ( v^j_i(t, \gamma(t)) - \dot{\gamma}_i(t) ) \, dt = 0 \qquad \text{ for all} \,\,\, i \in \{1,\ldots,d\}, \,\,j\in \{1,2\}
\]
where $v_i^j$ and $\dot{\gamma}_i$ are the $i$-th component of $v^j$ and $\dot{\gamma}$, respectively. 
This implies that $v^j(t,\gamma(t))=\dot{\gamma}(t)$ for a.e.~$t \in (0,1)$, that is, $v^j=\dot{\gamma}$ a.e.~on $\gr(\gamma)$. With this at hand, by means of \eqref{formula B} we can see that 
$J_{\alpha,\beta}(\rho^j,m^j)=a_j /a_\gamma$. Since \eqref{lem:inclusion:11} holds, we obtain $a_j= a_\gamma$, thus proving $(\rho,m)=(\rho^j,m^j)$ and hence extremality for $(\rho,m)$ in $C_{\alpha,\beta}$.

\medskip
\noindent\textit{Part 2:} $\ext(C_{\alpha,\beta}) \subset \{(0,0)\} \cup \points$.
\smallskip

\noindent Let $(\rho,m) \in C_{\alpha,\beta}$ be an extremal point. In particular, $J_{\alpha,\beta}(\rho,m) \leq 1$ so that by Lemma \ref{lem:prop B} $ii)$, we obtain $\rho \geq 0$ and $m=v \rho $ for some Borel field $v \colon X \to \R^d$. Notice that by extremality of $(\rho,m)$ and one-homogeneity of $J_{\alpha,\beta}$ we immediately infer
that either
$J_{\alpha,\beta}(\rho,m)=0$ or $J_{\alpha,\beta}(\rho,m)=1$. 
If $J_{\alpha,\beta}(\rho,m)=0$, by decomposing $(\rho,m)$ as 
\[
(\rho,m) = \frac{1}{2}(2\rho,2m) + \frac{1}{2}(0,0)
\] 
and using the extremality of $(\rho,m)$ together with the one-homogeneity of $J_{\alpha,\beta}$ we deduce that $(\rho,m) = (0,0)$. 
Thus, we consider the case 
\begin{equation}\label{thm:extremal:3}
J_{\alpha,\beta}(\rho,m)=1\,.
\end{equation}
Since by definition, $(\rho,m)$ solves the continuity equation in the sense of \eqref{cont weak} and $J_{\alpha,\beta}(\rho,m) = 1$, we can apply Lemma \ref{lem:prop cont} to obtain that $\rho=a \, dt \otimes \rho_t$ for some narrowly continuous curve $t \mapsto \rho_t \in \prob (\olom)$, where $a:=\rho(X)>0$. 

\medskip
\noindent\textit{Claim:} $\supp \rho_t$ is a singleton for each $t \in [0,1]$. 
\medskip

\noindent\textit{Proof of Claim:} The hypotheses of Theorem \ref{thm:lifting} are satisfied, therefore there exists a measure $\sigma \in \prob(\Gamma)$ concentrated on $\Gamma_v (\olom)$ and such that $\rho_t = (e_t)_\# \sigma$ for every $t \in [0,1]$. 
Assume by contradiction that there exists a time $\hat{t} \in [0,1]$ such that $\supp \rho_{\hat{t}}$ is not a singleton.
 Therefore, we can find a Borel set $E \subset \olom$ such that
\begin{equation} \label{thm:extremal:2bis}
0<\rho_{\hat{t}} (E)<1, \,\,\,\,\,\, 0< \rho_{\hat{t}} (\olom \smallsetminus E )<1 \,.
\end{equation}
Define the Borel set
\[
A:=\{ \gamma \in \Gamma \, \colon \, \gamma(\hat{t}) \in E \} = e^{-1}_{\hat{t}} (E) \,.
\]
By the relation $\rho_t = (e_t)_\# \sigma$ and definition of $A$ we obtain
$\rho_{\hat t}(E) =\sigma(A)$.
Therefore, from \eqref{thm:extremal:2bis}
\begin{equation}\label{thm:extremal:2} 
 0<{\sigma} (A)<1,\,\, \, \,\,\, 0 < \sigma (\Gamma \smallsetminus A )<1 \,.
\end{equation}
Define 
\[
\begin{aligned}
\lambda_1 & :=a \left( \frac{\beta}{2} \int_0^1 \int_{A} |\dot\gamma(t)|^2 \, d\sigma(\gamma) \, dt + \alpha \, \sigma(A) \right)\, ,\\  
\lambda_2 & :=a \left( \frac{\beta}{2} \int_0^1 \int_{A^c} |\dot\gamma(t)|^2 \, d\sigma(\gamma)  \, dt + \alpha \, \sigma(A^c)\right) \,,
\end{aligned}
\]
where $A^c:=\Gamma \smallsetminus A$. 
Note that $\lambda_1,\lambda_2$ are well defined (possibly being equal to $+\infty$) as the map
\begin{equation} \label{eq:mapL}
L(\gamma):=\int_0^1 |\dot{\gamma}(t)|^2\, dt  \,\,\,\, \text{ if } \,\, \gamma \in \AC^2([0,1];\R^d) \,, \,\,\,\, L(\gamma):=+\infty \,\,\,\, \text{ otherwise}, 
\end{equation}
is lower semicontinuous on $\Gamma$, and hence measurable.	
Notice that
\begin{equation} \label{thm:extremal:4}
\lambda_1 + \lambda_2 = a \left(\frac{\beta}{2} \int_0^1 \int_{\Gamma} |\dot\gamma(t)|^2 \, d\sigma(\gamma) \, dt + \alpha \right) = 
a \left( \frac{\beta}{2} \int_0^1 \int_{\Gamma} |v(t,\gamma(t))|^2 \, d\sigma(\gamma) \, dt + \alpha \right)\,,
\end{equation}
because $\sigma$ is concentrated on $\Gamma_v (\olom)$. 
Since $v(t,\cdot)$ belongs to $L^2_{\rho_t}(\olom;\R^d)$ for a.e.~$t \in (0,1)$, by \cite[Theorem 3.6.1]{bogachev} we obtain that the representation formula \eqref{representation:1} holds for $\f(x):=v(t,x)$ and a.e.~$t \in (0,1)$, that is, 
\begin{equation} \label{thm:extremal:5}
 \int_{\Gamma} |v(t,\gamma(t))|^2 \, d\sigma(\gamma) = 
 \int_{\olom} |v(t,x)|^2 \, d\rho_t(x) \,\,\, \text{ for a.e.} \,\,
 t \in (0,1)\,.
\end{equation}
Therefore, from \eqref{formula B}, \eqref{thm:extremal:3}, \eqref{thm:extremal:4} and \eqref{thm:extremal:5} we deduce $\lambda_1+\lambda_2=J_{\alpha,\beta}(\rho,m)=1$.

We now proceed with the proof of the claim separately for the cases $\alpha > 0$ and $\alpha = 0$.
Suppose first $\alpha> 0$. Notice that $\lambda_1,\lambda_2 > 0$ thanks to \eqref{thm:extremal:2} and the fact that $a>0$.
Decompose $(\rho,m)$ as 
\begin{equation} \label{thm:extremal:6}
(\rho,m)= \lambda_1 (\rho^1, m^1) + \lambda_2 (\rho^2, m^2 ) \,,
\end{equation}
where we defined 
\begin{equation} \label{thm:extremal:7}
\rho^j:= \frac{a}{\lambda_j} \, dt \otimes (e_t)_\# \sigma_j \,, \quad m^j:=\rho^j v \,,
\end{equation}
for $j=1,2$, with $\sigma_1 := \sigma \zak A$ and $\sigma_2 := \sigma \zak A^c$. Notice that $\rho^j \in \M^+(X)$, since $\sigma$ is a positive measure concentrated on $\Gamma_v (\olom)$, and $a,\lambda_j>0$. We now claim that $(\rho^j,m^j) \in C_{\alpha,\beta}$. First, we prove that $\de_t \rho^j + \div m^j=0$ in the sense of \eqref{cont weak}. Let $j=1$ and fix $\f \in C^1_c((0,1)\times \olom)$. Since $v(t,\cdot)$ belongs to $L^2_{\rho_t}(\olom;\R^d)$ for a.e. $t \in (0,1)$, by \cite[Theorem 3.6.1]{bogachev},  \eqref{representation:1} and the definition of $\sigma_1$, we get
\begin{equation}\label{eq:continuityeqextremality1}
\begin{aligned}
\int_X \de_t \f \, d\rho^1 +  \nabla \f \cdot d m^1 =  
& \frac{a}{\lambda_1} \int_0^1 \int_{\olom}  \de_t \f (t,x) + \nabla \f (t,x) \cdot v (t,x) \, d \left( (e_t)_\# \sigma_1 \right)(x) \, dt \\
 = & \frac{a}{\lambda_1}\int_0^1 \int_{A}  \de_t \f (t,\gamma(t)) + \nabla \f (t,\gamma(t)) \cdot v(t,\gamma(t)) \, d \sigma (\gamma) \, dt\,.
\end{aligned}
\end{equation}
Now recall that $\sigma$ is concentrated on $\Gamma_v (\olom)$ and that $\f$ is compactly supported in time, so that
\begin{equation}\label{eq:continuityeqextremality2}
\begin{aligned}
\int_X \de_t \f \, d\rho^1 +  \nabla \f \cdot d m^1 
 = & \frac{a}{\lambda_1}\int_0^1 \int_{A}  \de_t \f (t,\gamma(t)) + \nabla \f (t,\gamma(t)) \cdot \dot{\gamma}(t) \, d \sigma (\gamma) \, dt\\
 = &\frac{a}{\lambda_1}  \int_A \left( \int_0^1 \frac{d}{dt} \f (t,\gamma(t)) \, dt \right) \, d\sigma(\gamma) = 0 \,.
\end{aligned}
\end{equation}
The calculation for $j=2$ is similar. Also, 
by definition of $(\rho^j,m^j)$ and of $\lambda_j$, one can perform similar calculations to the ones in \eqref{thm:extremal:4}, \eqref{thm:extremal:5}, and prove that $J_{\alpha,\beta}(\rho^j,m^j)=1$. Hence $(\rho^j,m^j) \in C_{\alpha,\beta}$. We now claim that $(\rho^1,m^1) \neq (\rho^2,m^2)$. Suppose by contradiction that $(\rho^1,m^1) = (\rho^2,m^2)$. Then in particular $\rho^1 = \rho^2$, so that by \eqref{thm:extremal:7} we get
\begin{equation}\label{diff}
\frac{(e_t)_\# \sigma_1}{\lambda_1} =\frac{ (e_t)_\# \sigma_2 }{\lambda_2}\quad \text{for a.e. } t\in (0,1)\,.
\end{equation}
As $(\rho^j,m^j)$ are solutions of the continuity equation and $J_{\alpha,\beta}(\rho^j,m^j)=1$, from Lemma \ref{lem:prop cont}  it follows that the maps $ t \mapsto (e_t)_\# \sigma_j$ are narrowly continuous. In particular, \eqref{diff} holds for each $t \in [0,1]$. However, by \eqref{thm:extremal:2} and by definition of $A$, $\sigma_1$, $\sigma_2$,  we have
\[
 [(e_{\hat t})_\# \sigma_1](E) =   \sigma(A)>0 \,, \quad  \quad  [(e_{\hat t})_\# \sigma_2](E) = \sigma(\emptyset) = 0 \,,
\]
which contradicts \eqref{diff}. Therefore $(\rho^1,m^1) \neq (\rho^2,m^2)$, which shows that the decomposition \eqref{thm:extremal:6} is non-trivial. This is a contradiction, since we are assuming that $(\rho,m)$ is an extremal point for $C_{\alpha,\beta}$. Thus the claim follows.

Suppose now that $\alpha = 0$ and define the set
\begin{equation*}
Z := \left\{\gamma \in \Gamma : \int_0^1 |\dot \gamma(t)|^2 \, dt  = 0\right\}\,.
\end{equation*}
Notice that $Z$ is measurable, due to the measurability of the map $L$ at \eqref{eq:mapL}. We claim that $\sigma(Z)=0$. In order to prove that, let $Z^c:= \Gamma \smallsetminus Z$ and define the measures $\sigma_Z :=\sigma \zak Z$, $\sigma_{Z^c} :=\sigma \zak Z^c$, so that $\sigma=\sigma_Z + \sigma_{Z^c}$. Recalling that $\rho_t=(e_t)_{\#}\sigma$ for all $t \in [0,1]$, we can decompose
\begin{equation}\label{eq:convexcomb}
(\rho,m) = \frac{1}{2}(\rho^1,m^1) + \frac{1}{2}(\rho^2,m^2) \,,
\end{equation} 
where
\begin{equation*}
\begin{aligned}
\rho^1 & := a \, dt \otimes (e_t)_\# \sigma_{Z^c} \,, \,\,\,\, & m^1:= v \rho^1\, ,\\   
\rho^2 & :=  a\, dt  \otimes  (e_t)_\# \sigma_{Z^c}  + 2a \, dt \otimes (e_t)_\# \sigma_{Z}\,, \,\,\,\, & m^2:= v \rho^2\,.
\end{aligned}
\end{equation*}
Let $j=1,2$. Notice that $\rho^j \in \mathcal{M}^+(X)$ since $\sigma$ is a positive measure concentrated on $\Gamma_v (\olom)$ and $a > 0$. Following similar computation as \eqref{eq:continuityeqextremality1}-\eqref{eq:continuityeqextremality2} we infer that $(\rho^j,m^j)$ solves the continuity equation in the sense of \eqref{cont weak}. Moreover, by definition of $Z$ and the fact that $\sigma$ is concentrated on $\Gamma_v (\olom)$ we obtain
\begin{equation} \label{eq:fubini}
\begin{aligned}
	\int_0^1 \int_{Z^c} |v(t,\gamma(t))|^2 \, d\sigma(\gamma)\,dt & =
	\int_0^1 \int_{Z^c} |\dot\gamma(t)|^2 \, d\sigma(\gamma)\,dt \\
	& = \int_{Z^c} \int_0^1 |\dot\gamma(t)|^2 \, dt \, d\sigma(\gamma)\\
	 & =  \int_{\Gamma} \int_0^1 |\dot\gamma(t)|^2 \, dt \, d\sigma(\gamma) =  \int_0^1\int_{\Gamma} |v(t,\gamma(t))|^2 \, d\sigma(\gamma) \, dt \,,  
\end{aligned}
\end{equation}
where we employed Fubini's Theorem, which holds thanks to the measurability of the map $L$ and the identity \eqref{thm:extremal:5}, the latter implying boundedness of the last term in \eqref{eq:fubini}. By \eqref{eq:fubini} and arguing as in \eqref{thm:extremal:4}-\eqref{thm:extremal:5}, it is immediate to check that $J_{0,\beta}(\rho^j,m^j)=J_{0,\beta}(\rho,m)$. Recalling \eqref{thm:extremal:3} we then obtain $(\rho^j,m^j) \in C_{0,\beta}$.
As $(\rho,m)$ is an extremal point of $C_{0,\beta}$, from \eqref{eq:convexcomb} we deduce that $(\rho^1,m^1) = (\rho^2,m^2)$ and thus $ dt \otimes (e_t)_\# \sigma_{Z} = 0$. In particular, there exists $\hat t \in [0,1]$ such that $(e_{\hat t})_\# \sigma_{Z} = 0$. Hence for every $E \subset \Gamma$ measurable, by the positivity of $\sigma$, we have $\sigma_{Z}(E) \leq  (e_{\hat t})_\# \sigma_{Z}(e_{\hat t}(E))=0$, implying that $\sigma_{Z} = 0$. By \eqref{thm:extremal:2} and the definition of $\lambda_1$, $\lambda_2$, we conclude that $\lambda_1,\lambda_2 > 0$. With this property established, the claim that $\supp \rho_t$ is a singleton for each $t \in [0,1]$ follows by repeating the same arguments of the case $\alpha>0$, employing the decomposition of $(\rho,m)$ as in \eqref{thm:extremal:6}.

\medskip

We have shown that for each $t \in [0,1]$,  $\supp \rho_t$ is a singleton. We now conclude the proof of Theorem \ref{thm:extremal}. Since $\rho_t \in \prob(\olom)$, the latter implies the existence of a curve $\gamma \colon [0,1] \to \olom$ such that $\rho_t=\delta_{\gamma(t)}$ for each $t \in [0,1]$. We will now prove that $\gamma \in \AC^2([0,1];\R^d)$. By narrow continuity of $t \mapsto \rho_t$, we have that the map $t \mapsto  \, \f(\gamma(t))$
is continuous for all $\f \in C(\olom)$. By testing against the coordinate functions $\f(x):=x_i$, we obtain continuity for $\gamma$. Consider now $\f(t,x):=\xi(t)\eta(x)$ with $\xi \in C^{\infty}_c((0,1))$, $\eta \in C^1(\olom)$. Notice that the scalar map $t \mapsto \eta(\gamma(t))$ is continuous. Moreover, by testing the continuity equation $\de_t \rho + \div ( v \rho)=0$ against $\f$ we get
\[
\int_0^1 \xi'(t) \, \eta(\gamma(t)) \, dt = - \int_0^1 \xi(t) \, \nabla \eta (\gamma(t)) \cdot v(t,\gamma(t)) \, dt  \,,
\]
which implies that the distributional derivative of the map $t \mapsto \eta(\gamma(t))$ is given by 
\[
t \mapsto \nabla \eta (\gamma(t)) \cdot v(t,\gamma(t)) \,.
\]
We now remark that the above map belongs to $L^2((0,1))$, since
\[
\int_0^1 |\nabla \eta (\gamma(t)) \cdot v(t,\gamma(t))|^2 \, dt \leq \norm{\nabla \eta}_{\infty} \int_0^1 |v(t,\gamma(t))|^2 \, dt \leq C\, \norm{\nabla \eta}_{\infty} J_{\alpha,\beta}(\rho,m) < + \infty\,.
\]
Therefore, $t \mapsto \eta(\gamma(t))$ belongs to $\AC^2([0,1])$ for every fixed $\eta \in C^1(\olom)$. By choosing $\eta(x):=x_i$, $i = 1 , \dots, d$, we conclude that $\gamma \in \AC^2([0,1];\R^d)$. Since $\de_t \rho+ \div (v\rho )=0$, we can repeat the 
same argument employed to prove \eqref{lem:inclusion:field}, and infer
\begin{equation}  \label{lem:inclusion:30}
v(t,\gamma(t)) = \dot{\gamma}(t) \,\, \text{ a.e. in } \,\, (0,1) \,.
\end{equation}
From \eqref{lem:inclusion:30} and the fact that $\rho=a \, dt \otimes \delta_{\gamma(t)}$, $m=v\rho$, we then conclude $m=\dot\gamma \rho$. As $a>0$, we can apply \eqref{formula B} to compute
\begin{equation}\label{eq:lastJ}
J_{\alpha,\beta}(\rho,m)=a \left( \frac{\beta}{2} \int_0^1 |\dot{\gamma}(t)|^2 \, dt + \alpha \right) \,.
\end{equation}
Recalling that $J_{\alpha,\beta}(\rho,m)=1$ (see \eqref{thm:extremal:3}), from \eqref{eq:lastJ} we conclude that $a=a_{\gamma}$ with $a_{\gamma}<+\infty$. Therefore $(\rho,m)$ belongs to $\points$ according to Definition \ref{def:char}, and the proof of Theorem \ref{thm:extremal} is concluded.

\section{Application to sparse representation for inverse problems with optimal transport regularization} \label{sec:application}

In this section we deal with the problem of reconstructing a family of time-dependent Radon measures given a finite number of observations. To be more specific, let $H$ be a finite dimensional Hilbert space and $A: \curves \rightarrow H$ be a linear continuous operator, where continuity is understood in the following sense: 
given a sequence $(t \mapsto \rho^n_t)$ in $\curves$, we require that
\begin{equation}\label{eq:topologycweak}
\rho_t^n  \weakstar \rho_t \quad \text{ weakly* in } \,\, \M(\olom)  \,\,\text{ for all }\,\, t\in [0,1] \,\, \text{ implies } \,\, A\rho^n \rightarrow A\rho \ \text{ in } H\,,
\end{equation}
where, with a little abuse of notation, we will denote by $\rho^n$ both the curve $t \mapsto \rho_t^n$, as well as the measure $\rho^n:=dt \otimes \rho^n_t$. 

For some given data $y \in H$, we aim to reconstruct a solution $\rho \in \curves$ to the dynamic inverse problem
\begin{equation} \label{appl:inverse}
A \rho = y \,. 
\end{equation}
For $\alpha >0$ and $\beta >0$ we regularize the above inverse problem by means of the energy $J_{\alpha,\beta}$ defined in \eqref{prel:reg}, following the approach in \cite{bf}. In practice, upon introducing the space 
\[
\widetilde{\mathcal{M}}:= \curves \times \mathcal{M}(X;\R^d)\,,
\]  
we consider the Tikhonov functional $G:\widetilde{\mathcal{M}}\rightarrow \R \cup \{+\infty\}$ defined as  
\begin{equation}\label{eq:inversefunctional}
G(\rho,m) =J_{\alpha,\beta}(\rho,m) + F(A\rho)\,, 
\end{equation}
where $F: H \rightarrow \R \cup\{+\infty\}$ is a given fidelity functional for the data $y$, which is assumed to be convex, lower semicontinuous and bounded from below. Additionally, we assume that $G$ is proper. 
We then replace \eqref{appl:inverse} by
\begin{equation}\label{eq:inverseproblem}
\min_{(\rho,m) \in \widetilde{\mathcal{M}} } \ G(\rho,m) \, .
\end{equation}

\begin{rem}
Two common choices for the fidelity term $F$ in the case $H=\R^k$ are, for example, 
\begin{itemize}
\item [i)] $F(x) = I_{\{y\}}(x)$ for a given $y \in \R^k$ that forces the constraint $A\rho = y$,
\item[ii)] $F(x) = \frac{1}{2}\|x-y\|_2^2$ that recovers a classical $l^2$ penalization.
\end{itemize}
\end{rem}

\begin{rem}\label{rem:existence}
Under the above assumptions on $A$ and $F$, problem \eqref{eq:inverseproblem} admits a solution. 
Indeed, since $G$ is proper, any minimizing sequence $\{(\rho^n,m^n)\}_n$ is such that $\{G(\rho^n,m^n)\}_n$ is bounded. As $F$ is bounded from below and $J_{\alpha,\beta} \geq 0$, we deduce that $\{J_{\alpha,\beta}(\rho^n,m^n)\}_n$ is bounded. Therefore, Lemma \ref{lem:prop J} implies that $(\rho^n,m^n)$ converges (up to subsequences) to some $(\rho,m) \in \widetilde{\M}$, in the sense of \eqref{topology}. By weak* lower semicontinuity of $J_{\alpha,\beta}$ in $\M$ (see Lemma \ref{lem:prop J}) and by \eqref{eq:topologycweak} together with the lower-semicontinuity of $F$, we infer that $(\rho,m)$ solves \eqref{eq:inverseproblem}. 
\end{rem}

It is well-known that the presence of a finite-dimensional constraint in an inverse problem, such as \eqref{appl:inverse}, promotes sparsity in the reconstruction. This observation has been recently made rigorous in \cite{bc} and \cite{chambolle}, where it has been shown that the atoms of a sparse minimizer are the extremal points of the ball of the regularizers. In Theorem \ref{thm:extremal}, we provided a characterization for the extremal points of the ball of $J_{\alpha,\beta}$. Therefore, specializing the above-mentioned results to our setting yields the following characterization theorem for sparse minimizers to \eqref{eq:inverseproblem}.

\begin{thm}\label{thm:sparserepresentation}
Let $\alpha,\beta>0$. There exists a minimizer $(\hat \rho, \hat m) \in \widetilde{\M}$ of \eqref{eq:inverseproblem} that can be represented as
\begin{equation}
(\hat \rho, \hat m) = \sum_{i=1}^p c_i \, (\rho^i,m^i)\,,
\end{equation}
where $p\leq \dim(H)$, $c_i >0$, $\sum_{i=1}^p c_i = J_{\alpha,\beta}(\hat \rho,\hat m)$, and
\[
\rho^i= a_{\gamma_i} \, dt \otimes  \delta_{\gamma_i(t)} \,, \,\,\, m^i = \dot{\gamma}_i \,  \rho^i\,,
\]
where $\gamma_i \in \AC^2([0,1];\R^d)$ with $\gamma(t) \in \olom$ for each $t \in [0,1]$, and $a_{\gamma_i}^{-1}:=\frac{\beta}{2} \int_0^1 |\dot\gamma_i|^2 \, dt + \alpha$.  
\end{thm}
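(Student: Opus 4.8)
The plan is to deduce Theorem \ref{thm:sparserepresentation} by invoking the abstract representation theorems for sparse minimizers proved in \cite{bc} and \cite{chambolle}, applied to the regularizer $J_{\alpha,\beta}$. To do so, one must verify that $J_{\alpha,\beta}$ and the fidelity setting fit the abstract hypotheses of those results. The key ingredients are: (i) $J_{\alpha,\beta}$ is a proper, convex, weakly* lower semicontinuous, positively one-homogeneous functional on $\M$, and its sublevel set $C_{\alpha,\beta}$ is nonempty, convex, weakly* compact; (ii) the ambient space $\widetilde{\M}$ carries a topology in which bounded sequences (with respect to $G$) are relatively compact and in which $G$ is lower semicontinuous — this is exactly the content of Lemma \ref{lem:prop J} and the discussion in Remark \ref{rem:existence}; (iii) the observation operator $A$ factors through the finite-dimensional space $H$ and is continuous in the sense of \eqref{eq:topologycweak}, so that the map $(\rho,m) \mapsto A\rho$ is weakly*-to-norm continuous on sublevel sets of $J_{\alpha,\beta}$; and (iv) the ball $C_{\alpha,\beta}$ is the weak*-closed convex hull of its extremal points $\ext(C_{\alpha,\beta})$ (Krein--Milman / Milman), which by Theorem \ref{thm:extremal} are precisely $(0,0)$ together with the curve-concentrated pairs $\points$.

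Concretely, I would first recall the structural statement from \cite{bc,chambolle}: if $R$ is a one-homogeneous regularizer whose unit ball $\{R \le 1\}$ is weakly* compact, $F \circ A$ is a convex, lower semicontinuous term depending on the data only through a linear map into a space of dimension $n$, and a minimizer of $R + F(A\,\cdot\,)$ exists, then there is a minimizer of the form $\sum_{i=1}^p c_i u_i$ with $p \le n$, $c_i > 0$, $u_i \in \ext(\{R \le 1\})$, and $\sum_i c_i = R(\sum_i c_i u_i)$ equal to the regularizer value at the minimizer. Then I would check each hypothesis for $R = J_{\alpha,\beta}$: existence of a minimizer is Remark \ref{rem:existence}; one-homogeneity, convexity, lower semicontinuity and non-negativity are Lemma \ref{lem:prop J}; weak* compactness of $C_{\alpha,\beta}$ follows from the estimates \eqref{lem:prop J est} (bounding $\norm{\rho}_{\M(X)}$ and $\norm{m}_{\M(X;\R^d)}$, using $\alpha > 0$) together with the sequential weak* closedness statement in Lemma \ref{lem:prop J} and the weak* sequential compactness of bounded sets in $\M$; the fidelity $F$ being convex, lower semicontinuous, bounded below with $A$ continuous in the sense \eqref{eq:topologycweak} is assumed in the setup of Section \ref{sec:application}. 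Finally I would substitute the explicit description of $\ext(C_{\alpha,\beta})$ from Theorem \ref{thm:extremal}: each nonzero extremal point is $(\rho^i,m^i)$ with $\rho^i = a_{\gamma_i}\, dt \otimes \delta_{\gamma_i(t)}$, $m^i = \dot\gamma_i \rho^i$, $\gamma_i \in \AC^2([0,1];\R^d)$ valued in $\olom$, and $a_{\gamma_i}^{-1} = \frac\beta2 \int_0^1 |\dot\gamma_i|^2\, dt + \alpha$; the atom $(0,0)$ contributes nothing and can be dropped from the sum, so the representation takes the claimed form with $p \le \dim(H)$.

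The main obstacle — really the only point requiring care — is matching the topological framework. The abstract theorems in \cite{bc,chambolle} are typically phrased in a Banach space with a predual (so that weak* compactness of the unit ball is automatic) or in a locally convex space with a metrizability/sequential-compactness assumption on sublevel sets; here the natural object is the non-reflexive space $\M = \M(X) \times \M(X;\R^d)$ with its weak* topology, and one must confirm that the joint convergence in \eqref{topology} — weak* convergence of $(\rho^n,m^n)$ together with pointwise-in-time weak* convergence of the disintegrations — is the right notion in which $C_{\alpha,\beta}$ is compact, $G$ is lower semicontinuous (this uses \eqref{eq:topologycweak} through the stronger pointwise convergence, which is why the curve space $\curves$ rather than just $\M(X)$ is the correct domain for $A$), and the extremal-point decomposition from Milman's theorem applies. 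Once this identification is in place, the conclusion is a direct citation; I would therefore organize the proof as: (1) state the abstract result, (2) verify hypotheses via Lemmas \ref{lem:prop B}, \ref{lem:prop cont}, \ref{lem:prop J} and Remark \ref{rem:existence}, (3) insert Theorem \ref{thm:extremal}, (4) discard the trivial atom and read off $a_{\gamma_i}$. No substantial new computation is needed beyond what Lemma \ref{lem:prop J} already provides.
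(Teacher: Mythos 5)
Your proposal is correct and follows essentially the same route as the paper: invoke the abstract representation result of \cite{bc,chambolle}, verify its hypotheses via Lemma \ref{lem:prop J} and Remark \ref{rem:existence}, and then substitute the characterization of $\ext(C_{\alpha,\beta})$ from Theorem \ref{thm:extremal}. The only points the paper spells out that your sketch elides are that the cited theorem requires an extremal point of the \emph{solution set} $S$ (obtained by showing $S$ is convex and weak*-compact and applying Krein--Milman to $S$, not to $C_{\alpha,\beta}$) and that the extreme-ray alternative in its conclusion must be excluded, which follows from the coercivity of $J_{\alpha,\beta}$ since $\overline{C}$ then contains no rays.
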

In other words, the above theorem ensures the existence of a minimizer of \eqref{eq:inverseproblem} which is a finite linear combination of measures concentrated on the graphs of $\AC^2$-trajectories contained in $\olom$. In Section \ref{subsec:proof2} we give a proof of Theorem \ref{thm:sparserepresentation}, and we conclude the paper with Section \ref{subsec:dynamic}, where we apply the sparsity result of Theorem \ref{thm:sparserepresentation} to dynamic inverse problems with optimal transport regularization, following the approach of \cite{bf}.

\subsection{Proof of Theorem \ref{thm:sparserepresentation}} \label{subsec:proof2}
As already mentioned, the proof is an immediate consequence of Theorem \ref{thm:extremal} and a particular case of \cite[Corollary $2$]{chambolle} (see also \cite[Theorem $1$]{chambolle}). Before proceeding with the proof, for the reader's convenience, we recall Corollary $2$ from \cite{chambolle}. The definitions appearing in the statement below will be briefly recalled in the proof of Theorem \ref{thm:sparserepresentation}.

\begin{thm}[\cite{chambolle}]\label{thm:chambolle}
Let $\mathcal{U}$ be a locally convex space, $H$ be a finite-dimensional Hilbert space, $R:\mathcal{U} \rightarrow [-\infty,+\infty]$, $F: H\rightarrow  [-\infty,+\infty]$ be convex, and $A:\mathcal{U} \rightarrow H$ be linear. 
Consider the variational problem
\begin{equation}\label{eq:thmchambolle}
\inf_{u\in \mathcal{U}} \, R(u) + F(Au)\, .
\end{equation}
Suppose that the set of minimizers of \eqref{eq:thmchambolle}, denoted by $S$, is non-empty. Additionally, assume that there exists $\hat u \in \ext(S)$ such that the set 
\begin{equation}
\overline{C} = \{u\in \mathcal{U}: R(u) \leq R(\hat u)\}
\end{equation} 
is \emph{linearly closed}, the \emph{lineality space} of $\overline{C}$ is $\{(0,0)\}$
and $\inf_{u\in \mathcal{U}} R(u) < R(\hat u)$. Then, exactly one of the following conditions holds: 
\begin{itemize}
\item[i)] $\hat{u}$ is a convex combination of at most $\dim (H)$ extremal points of $\overline{C}$,  
\item[ii)] $\hat{u}$ is as a convex combination of at most  $\dim (H) -1$ points, which are either extremal points of $\overline{C}$, or belong to an extreme ray of $\overline{C}$.
\end{itemize}
\end{thm}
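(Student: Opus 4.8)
The plan is to reduce Theorem \ref{thm:chambolle} to classical facts about the extremal structure of the intersection of a convex set with an affine subspace of finite codimension (the Dubins and Klee representation theorems), and then to refine the resulting count using the boundary hypothesis $\inf_u R(u) < R(\hat u)$.

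First I would pass from the unconstrained functional $R + F\circ A$ to a purely constrained problem. Setting $\hat y := A\hat u$, I claim that $\hat u$ minimizes $R$ over the affine subspace $V := A^{-1}(\hat y) = \{u \in \mathcal U : Au = \hat y\}$: indeed, if some $u'\in V$ had $R(u') < R(\hat u)$, then $R(u') + F(Au') = R(u') + F(\hat y) < R(\hat u) + F(A\hat u)$, contradicting optimality of $\hat u$. Consequently the slice
\[
K := \overline C \cap V = \{u : R(u) \le R(\hat u),\ Au = \hat y\}
\]
coincides with $\{u \in V : R(u) = R(\hat u)\}$, hence is contained in the minimizer set $S$. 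Since $\hat u \in K \subseteq S$ and $\hat u \in \ext(S)$, extremality transfers: $\hat u$ is an extreme point of the convex set $K = \overline C \cap V$.

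Next I would exploit that $V$ is a fiber of the linear map $A$ into the $k$-dimensional space $H$, so $V$ has codimension at most $k := \dim H$; equivalently, $K$ is the intersection of $\overline C$ with at most $k$ affine hyperplanes. At this point I invoke the Dubins--Klee theorem on the extremal structure of such intersections: since $\overline C$ is \emph{linearly closed} and has \emph{trivial lineality space} (precisely the two structural hypotheses imposed on $\overline C$), every extreme point of $\overline C \cap V$ is a convex combination of at most $k+1$ elements, each of which is either an extreme point of $\overline C$ or lies on an extreme ray of $\overline C$. This is the heart of the argument and is where the hypotheses on $\overline C$ are consumed: linear closedness supplies enough extremal structure in the absence of compactness, while the trivial lineality space forbids full lines and forces the unbounded directions to manifest as genuine extreme rays.

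Finally I would improve the initial bound $k+1$ to the asserted $k$ (resp.\ $k-1$) and separate the two alternatives. The hypothesis $\inf_u R(u) < R(\hat u)$ guarantees that the strict sublevel set $\{R < R(\hat u)\}$ is nonempty, so that $R(\hat u)$ is a boundary value of $\overline C$ and $\hat u$ lies on the relative boundary $\{R = R(\hat u)\}$, as do all extreme points of $\overline C$; a Carath\'eodory-type dimension count carried out on this boundary face then saves one point relative to the generic Dubins--Klee bound. The dichotomy is dictated by whether a point on an extreme ray is actually needed: if $\hat u$ admits a representation using extreme points only, one lands in case i) with at most $k$ of them, whereas if an extreme ray is genuinely required its recession direction absorbs one further degree of freedom, yielding case ii) with at most $k-1$ points; mutual exclusivity — the word ``exactly'' — is read off from this structural distinction. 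The main obstacle I anticipate is exactly this last step: replacing finite-dimensional Minkowski--Carath\'eodory reasoning by the Dubins--Klee machinery (invoking linear closedness and triviality of the lineality space at the right places), and then carrying out the bookkeeping that both sharpens the count and produces a genuine dichotomy rather than a mere existence statement.
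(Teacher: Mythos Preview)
The paper does not prove Theorem~\ref{thm:chambolle}. It is quoted verbatim from \cite{chambolle} (Boyer, Chambolle, De Castro, Duval, de Gournay, Weiss), introduced with the words ``for the reader's convenience, we recall Corollary~2 from \cite{chambolle}'', and immediately followed by the proof of the \emph{different} statement Theorem~\ref{thm:sparserepresentation}. So there is no ``paper's own proof'' to compare your proposal against.

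That said, your sketch is broadly in line with how the result is obtained in the cited source: one reduces to the constrained problem on the fiber $V=A^{-1}(A\hat u)$, observes that $\hat u$ is extreme in the slice $\overline C\cap V$, and then invokes a Dubins--Klee type theorem on the intersection of a linearly closed convex set (with trivial lineality space) with finitely many affine hyperplanes to get a representation by at most $\dim H+1$ extreme points or extreme-ray points of $\overline C$. Your identification of the final sharpening step as the delicate point is accurate: the passage from $\dim H+1$ to $\dim H$ (resp.\ $\dim H-1$) is where the hypothesis $\inf R<R(\hat u)$ enters, and your heuristic that $\hat u$ lies on a proper face of $\overline C$ is the right intuition, but you have not actually carried out the bookkeeping that turns this into a rigorous saving of one point and into the stated dichotomy. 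In the original reference this step is handled with some care and is not a one-liner; if you want a self-contained proof you should consult \cite{chambolle} directly rather than rely on the present paper, which treats the result as a black box.
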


\begin{proof}[Proof of Theorem \ref{thm:sparserepresentation}]
We just need to verify that we can apply Theorem \ref{thm:chambolle} to the variational problem \eqref{eq:inverseproblem}. So, we choose $\mathcal{U}= \widetilde M$, $R=J_{\alpha,\beta}$ and $F$ and $A$ satisfying the assumptions stated above. Let $S$ be the set of solutions to \eqref{eq:inverseproblem}. 
 
First, notice that in Remark \ref{rem:existence} we have already shown that  the set of minimizers for \eqref{eq:inverseproblem} is non-empty, so that $S \neq \emptyset$. Moreover $S$ is compact with respect to the weak* topology. Indeed, given a sequence $(\rho^n,m^n)$ in $S$ we can use Lemma \ref{lem:prop J} to extract a subsequence (not relabelled) such that $(\rho^n,m^n) \weakstar (\rho,m)$ in $\mathcal{M}$ and $\rho_t^n \weakstar \rho_t$ in $\mathcal{M}(\olom)$ for every $t\in [0,1]$. Using the sequential lower semicontinuity of $J_{\alpha,\beta}$ with respect to weak* convergence combined with the continuity of $A$ (according to \eqref{eq:topologycweak}) and the lower semicontinuity of $F$, we obtain $(\rho,m) \in S$. We conclude that $S$ is sequentially weakly* compact and hence weakly* compact, thanks to the metrizability of the weak* convergence on bounded sets. Finally note that $S$ is convex thanks to the convexity of $F$ and $J_{\alpha,\beta}$ (Lemma \ref{lem:prop J}). By Krein--Milman's Theorem, we then infer the existence of a $(\hat{\rho},\hat m) \in \ext(S)$. 

The lineality space of $\overline{C}$ is defined as $\text{lin}(\overline{C}) = \text{rec}(\overline{C}) \cap (-\text{rec}(\overline{C}))$, where $\text{rec}(\overline{C})$ is the recession cone of $\overline{C}$ defined as the set of all $(\rho,m)\in \mathcal{U}$ such that $\overline C +\R_+(\rho,m)  \subset \overline C$. Hence, from the coercivity of $J_{\alpha,\beta}$ in Lemma \ref{lem:prop J} it is immediate to conclude that $\text{lin}(\overline{C}) = \{(0,0)\}$. 
Moreover, $\overline{C}$ is linearly closed if the intersection of $\overline{C}$ with every line is closed. It is easy to verify that, as $\overline{C}$ is weakly* closed (Remark \ref{rem:existence}), it is also linearly closed. Finally, the assumption  $\inf_{(\rho,m)\in \widetilde M} J_{\alpha,\beta}(\rho,m) < J_{\alpha,\beta}(\hat \rho, \hat m)$ is satisfied whenever $(\hat \rho, \hat m) \neq 0$, as in this case $J_{\alpha,\beta}( \hat \rho,  \hat m) >0$, while $\inf_{(\rho,m)\in \widetilde M} J_{\alpha,\beta}(\rho,m)=0$.
Hence, the hypotheses of Theorem \ref{thm:chambolle} for the functional \eqref{eq:inversefunctional} are verified. Notice also that $\overline C$ does not contain extreme rays. In order to prove that, we first recall that a ray of $\overline C$ is any set of the form $r_{p,v} = \{p+ tv : t >0\}$ for $p, v\in \overline C$, $v\neq 0$. An extreme ray of $\overline C$ is a ray $r_{p,v}$ such that for every segment intersecting $r_{p,v}$, the whole segment is contained in $r_{p,v}$.
Thanks to the coercivity of $J_{\alpha,\beta}$ in Lemma \ref{lem:prop J}, it is immediate to see that $\overline C$ contains no rays and thus no extreme rays.
Hence, from either of the conclusions $i),ii)$ in Theorem \ref{thm:chambolle}, we deduce that there exists a minimizer $(\hat \rho, \hat m) \in \mathcal{M}$ of \eqref{eq:inverseproblem} that can be represented as
\begin{equation}\label{eq:1representation}
(\hat \rho, \hat m) = \sum_{i=1}^p c_i (\rho_i,m_i)\, ,
\end{equation}
where $(\rho_i,m_i)\in \ext(C_{\alpha,\beta})$, $p\leq \dim(H)$, $c_i>0$ and $\sum_{i=1}^p c_i = J_{\alpha,\beta}(\hat \rho,\hat m)$.
We remark that if $(\hat \rho, \hat m) = 0$, the assumption $\inf_{(\rho,m)\in \widetilde M} J_{\alpha,\beta}(\rho,m) < J_{\alpha,\beta}(\hat \rho, \hat m)$ in Theorem \ref{thm:chambolle} is not satisfied, but the representation \eqref{eq:1representation} holds trivially.
Using the characterization of extremal points in Theorem \ref{thm:extremal} and \eqref{eq:1representation}, we obtain an explicit sparse representation for solutions of \eqref{eq:inverseproblem} and the proof is achieved.
\end{proof}

\subsection{Dynamic inverse problems} \label{subsec:dynamic}

Theorem \ref{thm:sparserepresentation} provides a representation formula for sparse solutions of \eqref{eq:inverseproblem} that holds for every $A$ and $F$ satisfying the above-stated hypotheses.
A relevant choice for $A$ and $F$ is proposed in \cite{bf} as a model for dynamic inverse problems: in particular, the authors apply their framework to variational reconstruction in undersampled dynamic MRI. In what follows we make an explicit choice of $F$ and $A$ in order to apply Theorem \ref{thm:sparserepresentation} to a special case of the framework in \cite{bf}, namely the case of discrete time sampling, and finite-dimensionality of the data for each sampled time. %

To be more specific, consider a discretization of the interval $[0,1]$ in $N$ points $t_1<t_2<\ldots<t_N$ and assume that we want to reconstruct an element of $\curves$, by only making observations at the time instants $t_1,\ldots,t_N$. To this aim, let $H_{t_i}$ be a family of finite-dimensional Hilbert spaces and introduce the product space $\mathcal{H}:=\bigtimes_{i=1}^N H_{t_i}$, normed by $\|y\|_{\mathcal{H}}^2 := \sum_{i=1}^N \|y_i\|^2_{H_{t_i}}$. Let
$A_{t_i} : \M(\olom) \to H_{t_i}$ be linear operators, which are assumed to be weak*  continuous for each $i=1,\ldots,N$.
For a given observation $(y_{t_1},\ldots,y_{t_N}) \in  \mathcal{H}$, consider the problem of finding $\rho \in \curves$ such that
\[
A_{t_i} \rho_{t_i} = y_{t_i} \,\, \text{ for each } \,\,
i = 1 ,\ldots,N\,.
\] 
Following \cite{bf}, we regularize the above problem by
\begin{equation}\label{eq:mri}
\min_{(\rho,m)\in \widetilde{\mathcal{M}}} \, J_{\alpha,\beta}(\rho,m) + \frac{1}{2}\sum_{i=1}^N \|A_{t_i}\rho_{t_i} - y_{t_i}\|^2_{H_{t_i}} \,.
\end{equation}
In order to recast the above problem into the form \eqref{eq:inverseproblem}, let $A : \curves \rightarrow \mathcal{H}$ be the linear operator defined by 
\[
A\rho := (A_{t_1}\rho_{t_1}, \ldots,A_{t_N}\rho_{t_N})\,.
\] 
Notice that $A$ is continuous in the sense of \eqref{eq:topologycweak}, thanks to the assumptions on $A_{t_i}$. We can then equivalently rewrite \eqref{eq:mri} as 
\begin{equation}\label{eq:mri2}
\min_{(\rho,m)\in \widetilde{\mathcal{M}}} \, J_{\alpha,\beta}(\rho,m) + \frac{1}{2}\|A\rho - y\|^2_{\mathcal{H}} \,.
\end{equation}
 In this way, we recover a problem of the type of \eqref{eq:inverseproblem}, where $F(x) := \frac{1}{2}\|x - y\|^2_{\mathcal{H}}$. Notice that $F$ is convex, lower semicontinuous and bounded from below. Moreover, the functional in  \eqref{eq:mri2} is proper, since $J_{\alpha,\beta}(0,0)=0$. Hence, we can apply Theorem \ref{thm:sparserepresentation} to conclude the following result.

 \begin{cor} \label{corollary}
Let $\alpha,\beta>0$. 
 There exists a minimizer $(\hat \rho, \hat m) \in \widetilde{\mathcal{M}}$ of \eqref{eq:mri} that can be represented as
\begin{equation}\label{eq:final}
(\hat \rho, \hat m) =  \sum_{i=1}^p c_i \, (\rho^i,m^i)\,,
\end{equation}
where $p\leq \dim(\mathcal{H}) = \sum_{i=1}^N \dim(H_i)$, $c_i >0$, $\sum_{i=1}^p c_i = J_{\alpha,\beta}(\hat \rho,\hat m)$, and
\[
\rho^i= a_{\gamma_i} \, dt \otimes  \delta_{\gamma_i(t)} \,, \,\,\, m^i = \dot{\gamma}_i \,  \rho^i\,,
\]
where $\gamma_i \in \AC^2([0,1];\R^d)$ with $\gamma(t) \in \olom$ for each $t \in [0,1]$, and $a_{\gamma_i}^{-1}:=\frac{\beta}{2} \int_0^1 |\dot\gamma_i|^2 \, dt + \alpha$.  
 \end{cor}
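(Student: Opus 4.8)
The plan is to deduce Corollary~\ref{corollary} directly from Theorem~\ref{thm:sparserepresentation} by verifying that the concrete setup described above (discrete time sampling with finite-dimensional data at each time) is a legitimate instance of the abstract inverse problem~\eqref{eq:inverseproblem}. The only work is to check that the objects $A$, $F$ and $\mathcal{H}$ satisfy the hypotheses under which Theorem~\ref{thm:sparserepresentation} was proved, and then to translate the dimension bound.

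\textbf{Step 1: the data space is a finite-dimensional Hilbert space.} Since each $H_{t_i}$ is a finite-dimensional Hilbert space and $\mathcal{H} = \bigtimes_{i=1}^N H_{t_i}$ with the norm $\|y\|_{\mathcal{H}}^2 = \sum_{i=1}^N \|y_i\|_{H_{t_i}}^2$, the space $\mathcal{H}$ is itself a finite-dimensional Hilbert space, with $\dim(\mathcal{H}) = \sum_{i=1}^N \dim(H_{t_i})$.

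\textbf{Step 2: continuity of $A$ in the sense of \eqref{eq:topologycweak}.} I would take a sequence $(t\mapsto\rho_t^n)$ in $\curves$ with $\rho_t^n \weakstar \rho_t$ in $\M(\olom)$ for every $t\in[0,1]$. Evaluating at the finitely many times $t_1,\dots,t_N$ and using that each $A_{t_i}\colon \M(\olom)\to H_{t_i}$ is weak*-continuous by assumption, we get $A_{t_i}\rho_{t_i}^n \to A_{t_i}\rho_{t_i}$ in $H_{t_i}$ for each $i$; since the sum is finite this gives $A\rho^n \to A\rho$ in $\mathcal{H}$, which is exactly \eqref{eq:topologycweak}. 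Linearity of $A$ is immediate from linearity of the $A_{t_i}$.

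\textbf{Step 3: properties of the fidelity term and properness.} With $F(x) := \tfrac12\|x - y\|_{\mathcal{H}}^2$, the functional $F$ is convex, continuous (hence lower semicontinuous) and bounded below by $0$, so it meets the standing assumptions on the fidelity term. Finally, since $J_{\alpha,\beta}(0,0) = 0$ and $F$ is finite everywhere, the functional $G(\rho,m) = J_{\alpha,\beta}(\rho,m) + F(A\rho)$ is proper. Thus all hypotheses of Theorem~\ref{thm:sparserepresentation} hold, and applying it with $H = \mathcal{H}$ yields a minimizer $(\hat\rho,\hat m)$ of \eqref{eq:mri2} — equivalently of \eqref{eq:mri}, since the two problems coincide after regrouping the fidelity term — of the stated form with $p \le \dim(\mathcal{H}) = \sum_{i=1}^N \dim(H_i)$ and the prescribed curves $\gamma_i \in \AC^2([0,1];\R^d)$, coefficients $c_i > 0$, $\sum_i c_i = J_{\alpha,\beta}(\hat\rho,\hat m)$, and $a_{\gamma_i}^{-1} = \tfrac{\beta}{2}\int_0^1|\dot\gamma_i|^2\,dt + \alpha$.

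There is essentially no hard step here: the statement is a specialization, and the entire content is the bookkeeping that the discrete-in-time, finite-data model genuinely fits the abstract template. The one point deserving a line of care is the equivalence between \eqref{eq:mri} and \eqref{eq:mri2}, i.e. that $\tfrac12\sum_{i=1}^N\|A_{t_i}\rho_{t_i} - y_{t_i}\|_{H_{t_i}}^2 = \tfrac12\|A\rho - y\|_{\mathcal{H}}^2$ for $y = (y_{t_1},\dots,y_{t_N})$, which is immediate from the definition of the norm on $\mathcal{H}$; and noting that the weak*-continuity hypothesis on the $A_{t_i}$ is precisely what is needed in Step~2.
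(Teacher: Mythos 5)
Your proposal is correct and follows exactly the paper's own route: verify that $\mathcal{H}$ is a finite-dimensional Hilbert space, that $A$ is continuous in the sense of \eqref{eq:topologycweak} via the weak*-continuity of each $A_{t_i}$, that $F(x)=\tfrac12\|x-y\|_{\mathcal{H}}^2$ is convex, lower semicontinuous and bounded below with the overall functional proper, and then apply Theorem \ref{thm:sparserepresentation}. Nothing is missing.
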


\begin{rem}
The upper bound $p \leq \sum_{i=1}^N \dim (H_i)$ in the representation formula \eqref{eq:final} might not be optimal. However, a careful analysis of the faces of the ball of the Benamou--Brenier energy, possibly under additional assumptions on the operator $A$ and fidelity term $F$, could be needed to substantiate such conjecture. We leave this question open for future research. 
\end{rem}

\section*{Acknowledgements}
We thank the referee for the useful suggestions provided, particularly for encouraging us to include the case $\alpha=0$ in the characterization of Theorem \ref{thm:extremal}, which was previously missing. 
KB and SF gratefully acknowledge support by the Christian Doppler Research Association (CDG) and Austrian Science Fund (FWF) through the Partnership in Research
project PIR-27 ``Mathematical methods for motion-aware medical imaging'' and project P 29192 ``Regularization graphs for variational imaging''.
MC is supported by the Royal Society (Newton International Fellowship NIF\textbackslash R1\textbackslash 192048 Minimal partitions as a robustness boost for neural network classifiers). The Institute of Mathematics and Scientific Computing, to which KB, SF and FR are affiliated, is a member of NAWI Graz (\texttt{http://www.nawigraz.at/en/}). The authors  KB, SF and FR are further members of/associated with BioTechMed Graz (\texttt{https://biotechmedgraz.at/en/}).

	\bibliography{bibliography.bib}
	\bibliographystyle{my_plain.bst}
\end{document}